\newtheorem{Theorem}{Theorem}[section]
\newtheorem{Proposition}[Theorem]{Proposition}
\newtheorem{Lemma}[Theorem]{Lemma}
\newtheorem{Corollary}[Theorem]{Corollary}
\theoremstyle{definition}
\theoremstyle{remark}
\numberwithin{equation}{section}
\newcommand{\Z}{{\mathbb Z}}
\newcommand{\R}{{\mathbb R}}
\newcommand{\C}{{\mathbb C}}
\newcommand{\s}[2]{{\langle #1 , #2 \rangle}}
\newcommand{\tr}{{\textrm{\rm tr}\:}}
\newcommand{\ov}[1]{{\overline{ #1}}}
\renewcommand{\Im}{{\textrm{\rm Im}\:}}
\begin{document}

\title[Oscillation theory]{Oscillation theory and semibounded canonical systems}

\author{Christian Remling}

\address{Department of Mathematics\\
University of Oklahoma\\
Norman, OK 73019}
\email{christian.remling@ou.edu}
\urladdr{www.math.ou.edu/$\sim$cremling}

\author{Kyle Scarbrough}

\address{Department of Mathematics\\
University of Oklahoma\\
Norman, OK 73019}

\email{kyle.d.scarbrough-1@ou.edu}
\date{November 15, 2018}

\thanks{2010 {\it Mathematics Subject Classification.} 34C10 34L40 47A06}

\keywords{canonical system, oscillation theory, essential spectrum}

\begin{abstract}
Oscillation theory locates the spectrum of a differential equation by counting the zeros of its solutions.
We present a version of this theory for canonical systems $Ju'=-zHu$ and then use it to discuss
semibounded operators from this point of view. Our main new result is a characterization
of systems with purely discrete spectrum in terms of the asymptotics of their coefficient functions; we also
discuss the exponential types of the transfer matrices.
\end{abstract}
\maketitle
\section{Introduction}
A \textit{canonical system }is a differential equation of the form
\begin{equation}
\label{can}
Ju'(x) = -zH(x)u(x) , \quad J=\begin{pmatrix} 0 & -1 \\ 1 & 0 \end{pmatrix} ,
\end{equation}
with a locally integrable coefficient function $H(x)\in\R^{2\times 2}$, $H(x)\ge 0$, $\tr H(x)=1$.
Canonical systems are of fundamental importance in spectral theory because they may be used
to realize arbitrary spectral data; more precisely, they are in one-to-one correspondence
to generalized Herglotz functions, as we will discuss in more detail below.

We usually consider half line problems $x\in [0,\infty)$, and we always impose the boundary condition
\begin{equation}
\label{bc}
u_2(0) = 0
\end{equation}
at the (regular) left endpoint $x=0$. The canonical system together with this boundary condition generates
a self-adjoint relation $\mathcal S$ on the Hilbert space $L^2_H(0,\infty)$ and then also a self-adjoint
operator $S$ on the possibly smaller space $\ov{D(\mathcal S)}$, after dividing out the multi-valued part
$\mathcal S(0)$ of $\mathcal S$. We refer the reader to \cite{Rembook} for more on the basic theory.
We are interested in the spectral theory of $S$.

The \textit{$m$ function }is defined as $m(z)=f(0,z)$ on $z\in\C^+=\{ z\in\C: \Im z>0\}$, and here $f(x,z)$ denotes the
(unique, up to a constant factor) $L^2_H$ solution of \eqref{can}. We also identify the vector
$f(0,z)\in\C^2\setminus \{ 0\}$
with the point $f_1(0,z)/f_2(0,z)\in\C_{\infty}$ on the Riemann sphere, so $m(z)\in\C_{\infty}$.

In fact, the $m$ function is a \textit{generalized Herglotz function: }it is a holomorphic map $m:\C^+\to\C_{\infty}$
that takes values in $\ov{\C^+}$. A (genuine) Herglotz function is defined by the slightly stronger version of
this condition that the values lie in $\C^+$. Such a function satisfies the Herglotz representation formula:
it is of the form
\[
m(z) = a + bz + \int_{-\infty}^{\infty} \left( \frac{1}{t-z} - \frac{t}{t^2+1} \right)\, d\rho(t) ,
\]
with $a\in\R$, $b\ge 0$, and $\rho$ is a positive Borel measure on $\R$ (possibly $\rho=0$) with
$\int\frac{d\rho(t)}{1+t^2}<\infty$. This measure $\rho$
can serve as a spectral measure of $\mathcal S$.

A fundamental result from the inverse spectral theory of canonical systems \cite[Theorem 5.1]{Rembook} says
that every generalized Herglotz function is the $m$ function of a unique canonical system.

A maximal open interval with $H(x)=P_{\alpha}$ there is called a \textit{singular interval }of
\textit{type }$\alpha$, and here
\[
P_{\alpha}= e_{\alpha}e^*_{\alpha} = \begin{pmatrix} \cos^2\alpha & \sin\alpha\cos\alpha \\
\sin\alpha\cos\alpha & \sin^2\alpha \end{pmatrix} , \quad e_{\alpha}=
\begin{pmatrix} \cos\alpha \\ \sin\alpha \end{pmatrix} ,
\]
denotes the projection onto $e_{\alpha}$. Points which are not in the union of the singular intervals
are called \textit{regular. }In the extreme case when $(0,\infty)$ is a single singular interval,
we obtain the $m$ functions $m(z)\equiv a\in\R_{\infty}$;
these are exactly the generalized Herglotz functions that are not Herglotz functions.
These canonical systems $H\equiv P_{\alpha}$ have spectral
measure $\rho=0$, which is consistent with the above remarks
and also with the fact that $D(\mathcal S)=0$ in this case.

\textit{Oscillation theory }is a well known, powerful tool, certainly for the classical equations
such as Schr{\"o}dinger, Sturm-Liouville, Jacobi, Dirac equations. The basic idea is to write solutions
in polar coordinates, and then the angle will satisfy a first order equation, to which comparison principles
can be applied. This will lead to relations between the zeros of solutions and the location of the spectrum.

There is a large literature on oscillation theory in general in a large variety of settings; see, for example,
\cite{GST,GZ,Hart,KT,RBK,Sturm,Swan,WMLN}. However, it appears that
oscillation theory has not yet
been systematically employed in the spectral theory of canonical systems in the way we use it in this paper,
so it will be best for us and the reader to
develop the basic theory from scratch here, relying on these well known ideas and especially
the treatment given in \cite{WMLN}.
The one new aspect that we will have to pay careful attention to will be the presence of relations
(rather than operators) and their multi-valued parts, which correspond to the singular intervals
of our system \cite[Section 2.4]{Rembook}.
When these somewhat tedious technical issues have been addressed,
it will actually turn out that oscillation theory is especially convenient and user-friendly
for canonical systems (compared to, say, Schr{\"o}dinger equations),
thanks to the simple form of the basic equation \eqref{ot}.

We then apply oscillation theory to semibounded canonical systems. In fact, we will almost exclusively restrict
ourselves to systems with specifically $\sigma(H)\subseteq [0,\infty)$, and we denote the collection of these
coefficient functions $H(x)$ by $\mathcal C_+$. Our methods would give more general results, but it seems best
to present them in this setting.

We start out by giving new proofs of the fundamental and beautiful results of Winkler and Woracek \cite{Win,WW}.
We do this for two reasons: first of all, these results certainly deserve some additional exposure;
second, and more importantly, oscillation theory is an ideal tool to analyze these issues, and we believe
that our new proofs are short, direct, and perhaps more transparent than the original proofs, which referred to the
theory of strings as a black box.
Here's what we will actually prove in this part of the paper.
\begin{Theorem}[\cite{WW}]
\label{TWW1}
$H\in\mathcal C_+$ if and only if $H(x)=P_{\varphi(x)}$ for some decreasing function $\varphi(x)$ with
$\pi/2\ge\varphi(0+)\ge\varphi(\infty)\ge -\pi/2$.
\end{Theorem}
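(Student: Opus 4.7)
The plan is to work throughout in the Prüfer angle formalism developed earlier: writing a nontrivial real solution of \eqref{can} as $u(x)=r(x)e_{\theta(x)}$ converts the basic equation \eqref{ot} into
\[
\theta'(x,z) = z\,\langle e_{\theta(x)},H(x)e_{\theta(x)}\rangle .
\]
By the oscillation theorem, $\sigma(S)\subseteq[0,\infty)$ is equivalent to the statement that, for every $z<0$, the Prüfer angle $\theta(\cdot,z)$ corresponding to $u_2(0)=0$ decreases by strictly less than $\pi$ across $(0,\infty)$, since otherwise a comparison argument produces an eigenvalue of $S$ lying below $z$.

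For the ``only if'' direction I would first rule out $\operatorname{rank} H(x)=2$ on a set of positive measure: if it held on such a set $E$, then $\langle e_\theta,He_\theta\rangle\ge c>0$ uniformly in $\theta$ on $E$, so $\theta(0+,z)-\theta(\infty,z)\ge|z|c\,|E|\to\infty$ as $|z|\to\infty$, producing arbitrarily many eigenvalues of $S$ below $0$. Since $H\ge0$ and $\tr H=1$, this gives $H=P_{\varphi}$ for some measurable $\varphi\pmod\pi$, and the Prüfer equation becomes $\theta'=z\cos^2(\theta-\varphi)$. To show that $\varphi$ admits a decreasing representative, I would argue that, if $\varphi$ had a strict increase by some $\delta>0$ across some interval, then for large $|z|$ the solution $\theta(x,z)$ would be attracted to the ``slow manifold'' $\theta\equiv\varphi+\pi/2\pmod\pi$, and the rise of this manifold together with $\theta'\le0$ would force $\theta$ down to the next lower branch $\theta=\varphi-\pi/2\pmod\pi$, costing a full $\pi$ and producing a negative eigenvalue. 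I would make this precise by comparing with the autonomous equation $\psi'=z\cos^2\psi$ on subintervals where $\varphi$ is almost constant.

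For the ``if'' direction, suppose $H=P_\varphi$ with $\varphi$ decreasing and $\pi/2\ge\varphi(0+)\ge\varphi(\infty)\ge-\pi/2$. The boundary condition $u_2(0)=0$ together with any initial singular interval of type $\pi/2$ fix $\theta(0+,z)=0$. The curve $x\mapsto\varphi(x)-\pi/2$ is then a lower barrier: at any point where $\theta=\varphi-\pi/2$, one has $\cos^2(\theta-\varphi)=0$ and hence $\theta'=0\ge\varphi'$, so the quantity $\theta-(\varphi-\pi/2)$ has nonnegative derivative at each of its zeros and is therefore preserved as nonnegative. This yields
\[
\theta(x,z) \;\ge\; \varphi(x)-\pi/2 \;\ge\; \varphi(\infty)-\pi/2 \;\ge\; -\pi ,
\]
so the total decrease of $\theta$ across $(0,\infty)$ is at most $\pi$, and the range constraints on $\varphi$ supply the strict inequality needed to conclude $N((-\infty,z),S)=0$ for every $z<0$.

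The chief obstacle, I expect, is the quantitative slow-manifold argument in the monotonicity step: turning the heuristic ``$\theta$ follows $\varphi+\pi/2$ for large $|z|$'' into an honest lower bound on the decrease of $\theta$ when $\varphi$ is only measurable. The cleanest route is probably a reduction to piecewise-constant $\varphi$, where the equation is autonomous and can be integrated explicitly, followed by an approximation/stability argument. Some extra care is also needed with singular intervals and the multi-valued part of $\mathcal S$, as flagged in the introduction, when converting the $\theta$-count into a genuine eigenvalue count for $S$.
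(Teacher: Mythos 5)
Your overall framework coincides with the paper's: reduce $H\in\mathcal C_+$ via oscillation theory to the constraint $\theta(x;-t)>-\pi$ for all $x,t>0$ and argue via the Pr\"ufer equation $\theta'=z\cos^2(\theta-\varphi)$. The ruling out of $\det H>0$ on a set of positive measure is exactly the paper's first step. However, both halves of your argument, as sketched, have real gaps, and in both places the paper does something different and cleaner.

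For the ``only if'' direction, you try to show that a given measurable representative $\varphi$ can be replaced by a decreasing one through a slow-manifold contradiction. You already flag this as ``the chief obstacle,'' and you are right to worry: making the statement ``$\theta$ tracks $\varphi+\pi/2$ for large $|z|$'' quantitative when $\varphi$ is merely measurable is delicate, and the reduction to piecewise-constant $\varphi$ plus approximation is nontrivial to carry out. The paper sidesteps this entirely: it \emph{defines} the candidate $\varphi_0(x)=\lim_{t\to\infty}\theta(x;-t)+\pi/2$. This limit exists and is automatically decreasing, because $\theta(x;-t)$ is bounded below by $-\pi$ and is decreasing both in $x$ (since $z<0$) and in $t$. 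Monotonicity of $\varphi_0$ is then inherited, not argued by contradiction. What remains is to show $P_{\varphi_0}=H$ a.e., which follows by integrating $\theta'=-t\sin^2(\theta-\varphi+\pi/2)$ over $(0,L)$, bounding $\int_0^L\sin^2(\cdot)\,dx<\pi/t$, and applying Fatou as $t\to\infty$ to conclude $\varphi_0\equiv\varphi\bmod\pi$ a.e. This is a genuinely more efficient route than the slow-manifold argument you envisage.

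For the ``if'' direction, your barrier argument has a familiar logical gap: from ``$\theta-(\varphi-\pi/2)$ has nonnegative derivative at each of its zeros'' one \emph{cannot} conclude the quantity stays nonnegative. (Compare $f(x)=-x^3$ at $x=0$: $f(0)=0$, $f'(0)=0$, yet $f$ turns negative.) When $\theta$ touches $\psi:=\varphi-\pi/2$ the vector field vanishes to second order, so you need a quantitative estimate. The paper supplies it: near a first contact time $y$, bound $\sin^2(\theta-\psi(x))\le(\theta-\psi(y-))^2$ on the relevant range, and compare with $\theta_1'=-t(\theta_1-\psi(y-))^2$, whose solution from $\theta_1(a)>\psi(y-)$ only reaches $\psi(y-)$ asymptotically, never in finite time. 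This yields the \emph{strict} inequality $\theta(x)>\psi(x-)$, which combined with $\psi(x-)\ge\psi(\infty)\ge-\pi$ gives $\theta(x)>-\pi$. The strictness matters here: $\theta(L;-t)=-\pi$ at finite $L$ would already produce a negative eigenvalue by Lemma \ref{L2.1}/Theorem \ref{T2.1}, and your non-strict chain $\theta\ge\varphi-\pi/2\ge-\pi$ does not rule this out. Finally, the paper also handles the boundary case $\psi(0+)=0$ not by a separate estimate but by truncating $H$ near $0$ and appealing to convergence of spectral measures; your sketch does not address this case.

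In short: right framework and right first reduction, but the construction of the decreasing $\varphi$ should go through the limit $\varphi_0=\lim_{t\to\infty}\theta(x;-t)+\pi/2$ rather than a stability argument, and the barrier step needs the quadratic comparison to be an actual proof.
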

As a first minor payoff of our new viewpoint, we effortlessly obtain
a whole line version of Theorem \ref{TWW1}.
\begin{Theorem}
\label{T1.2}
The whole line system with coefficient function $H(x)$, $x\in\R$, has non-negative spectrum
if and only if $H(x)=P_{\varphi(x)}$ for some decreasing function $\varphi(x)$
with $\varphi(-\infty)-\varphi(\infty)\le\pi$.
\end{Theorem}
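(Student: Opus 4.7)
The plan is to deduce Theorem~\ref{T1.2} from Theorem~\ref{TWW1} by splitting $\R$ at $x=0$ (translating if necessary) and studying the two resulting half-line problems. As a preliminary normalization, since $P_\varphi=P_{\varphi+k\pi}$, a decreasing $\varphi$ on $\R$ with $\varphi(-\infty)-\varphi(\infty)\le\pi$ may be shifted by a multiple of $\pi$ to lie in $[-\pi/2,\pi/2]$; this is the representative to use for the ``if'' direction. Impose the boundary condition $u_2(0)=0$ at $x=0$, producing half-line operators $S_+$ on $L^2_H([0,\infty))$ and $S_-$ on $L^2_H((-\infty,0])$.

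The main technical claim is the splitting equivalence: $\sigma(S)\subseteq[0,\infty)$ if and only if both $\sigma(S_+)\subseteq[0,\infty)$ and $\sigma(S_-)\subseteq[0,\infty)$. One direction is essentially a quadratic-form argument: any $u\in D(S_+)$ with $u(0)=0$ extends by zero to an element of $D(S)$ with identical quadratic form, so $S\ge 0$ forces $S_+\ge 0$ on a codimension-one subspace of $D(S_+)$; by min-max this leaves at most one possible negative eigenvalue, which a finer oscillation count at $E<0$ should exclude. The converse uses that $S$ differs from the direct sum $S_+\oplus S_-$ by a rank-at-most-$2$ resolvent perturbation, and an oscillation-count comparison at $E<0$, exploiting the control on $\varphi$ at the splitting point coming from the decreasing property, prevents the creation of negative eigenvalues. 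I expect this splitting equivalence to be the main obstacle; it should follow from the oscillation-theoretic machinery for canonical systems developed earlier in the paper.

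Granted the splitting, the proof becomes a short symmetry argument. Theorem~\ref{TWW1} applied to $S_+$ directly gives $H=P_\varphi$ on $[0,\infty)$ with $\varphi$ decreasing and $\pi/2\ge\varphi(0+)\ge\varphi(\infty)\ge-\pi/2$. For $S_-$, the substitution $y=-x$, $v(y)=u(-y)$ transforms $Ju'=-zHu$ on $(-\infty,0]$ into $Jv'=zH(-y)v$ on $[0,\infty)$, i.e., a canonical system with coefficient $\tilde H(y)=H(-y)$ at spectral parameter $-z$. Hence $\sigma(S_-)\subseteq[0,\infty)$ iff the $\tilde H$-system on $[0,\infty)$ has spectrum in $(-\infty,0]$. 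The latter is the negative-spectrum analogue of Theorem~\ref{TWW1}, obtained from it via the involution $u\mapsto\mathrm{diag}(1,-1)u$ (which sends $P_\varphi$ to $P_{-\varphi}$ and $z$ to $-z$): it requires $\tilde\varphi$ \emph{increasing} on $[0,\infty)$ with $-\pi/2\le\tilde\varphi(0+)\le\tilde\varphi(\infty)\le\pi/2$. Translating back via $\varphi(x)=\tilde\varphi(-x)$ gives $\varphi$ decreasing on $(-\infty,0]$ with $-\pi/2\le\varphi(0-)\le\varphi(-\infty)\le\pi/2$. Gluing the two halves yields $\varphi$ decreasing on $\R$ with $\varphi(-\infty)\le\pi/2$ and $\varphi(\infty)\ge-\pi/2$, whence $\varphi(-\infty)-\varphi(\infty)\le\pi$.
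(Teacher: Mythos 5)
Your proposal hinges on a ``splitting equivalence'' at the fixed cut point $x=0$, namely that $\sigma(S)\subseteq[0,\infty)$ if and only if both half-line operators $S_\pm$ (with boundary condition $u_2(0)=0$) have non-negative spectrum. This is not established by the arguments you sketch, and it is in fact exactly the crux the paper works hard to avoid.

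For the forward direction, your min-max argument correctly shows that $S\ge 0$ forces $Q_+\ge 0$ on the codimension-one subspace of elements of $D(S_+)$ that vanish at $0$ (the other elements have $u_1(0)\neq0$ and do not extend continuously by zero). This leaves one possible negative eigenvalue of $S_+$, and you assert that ``a finer oscillation count at $E<0$ should exclude'' it. This is precisely the missing content: there is no easy exclusion, because the oscillation machinery developed in the paper (Theorem~\ref{T3.1}) explicitly \emph{allows} one pole of $m_+$ or $m_-$ on $(-\infty,0)$ to occur, and the only way to rule out that one pole is essentially to prove the theorem you are after. The paper sidesteps this by \emph{tolerating} one negative half-line eigenvalue, invoking Theorem~\ref{T3.1} rather than Theorem~\ref{TWW1} to obtain the decreasing $\varphi$, and then deriving a contradiction from a cleverly chosen \emph{floating} cut point $a$ (not $a=0$) at which $\varphi(a-)<\pi/2$ and $\varphi(a+)>-\pi/2$, so that \emph{both} half lines would have negative spectrum, contradicting the pole count.

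For the converse direction, $S_\pm\ge 0$ does not imply $S\ge 0$: the decoupled operator $S_+\oplus S_-$ sits \emph{above} $S$ in the Dirichlet-bracketing sense, so a finite-rank resolvent perturbation can push eigenvalues \emph{down} below $0$. Your remark about ``an oscillation-count comparison exploiting the control on $\varphi$ at the splitting point'' names the right ingredient but supplies no argument. The paper handles this with a delicate computation: it cuts at a point where $\psi=\varphi-\pi/2$ crosses $0 \bmod \pi$ (if any), uses Theorem~\ref{TWW2} (and its left-half-line analogue) to identify $\psi(0\pm),\psi(\pm\infty)$ with the boundary values of the angles $\alpha_\pm$ encoding $m_\pm$, and then shows, from the ranges and monotonicity, that $\alpha_+\not\equiv\alpha_-\bmod\pi$ on $(-\infty,0)$; this is exactly the condition that the whole-line problem has no negative eigenvalue. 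The decomposition method then handles the essential spectrum.

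In short, the involution/mirroring part of your argument (how the left half line reduces to Theorem~\ref{TWW1} with $\varphi\mapsto -\varphi$, $z\mapsto -z$) is fine, and your overall plan of reducing to half lines is in the same spirit as the paper. But the reduction is carried out at a floating cut point in the paper, and the equivalence you assert at $x=0$ is precisely the nontrivial content: both directions of it are left unproved in your sketch, and the forward direction is genuinely not attainable by the form/min-max argument alone, since one negative half-line eigenvalue can survive.
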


If $H\in\mathcal C_+$, then the $m$ function
\[
m(z) = a+ bz+ \int_{[0,\infty)} \left( \frac{1}{t-z} - \frac{t}{t^2+1} \right)\, d\rho(t)
\]
can be holomorphically continued to $\C\setminus [0,\infty)$, and $m(t)$ is real valued
and increasing on $(-\infty,0)$. In particular, the limits $m(-\infty), m(0-)\in [-\infty,\infty]$ exist.
\begin{Theorem}[\cite{WW}]
\label{TWW2}
Let $H\in\mathcal C_+$, and write $H(x)=P_{\varphi(x)}$, with $\varphi$ chosen as in Theorem \ref{TWW1}. Then
\[
\tan\varphi(0+)=-m(-\infty), \quad \tan\varphi(\infty)=-m(0-) .
\]
\end{Theorem}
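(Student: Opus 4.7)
The plan is a scaling argument combined with continuity of the $m$ function under coefficient convergence. For $\lambda>0$, define $H_\lambda(x):=H(x/\lambda)$; this remains in $\mathcal{C}_+$, with $\varphi_\lambda(x)=\varphi(x/\lambda)$. The substitution $v(y):=u_\lambda(\lambda y)$ transforms the canonical system $Ju_\lambda'=-zH_\lambda u_\lambda$ into $Jv'=-(\lambda z)Hv$ and preserves the boundary condition $u_2(0)=0$, while rescaling the $L^2$ norm only by a constant factor. This yields the scaling identity
\[
m_\lambda(z)=m(\lambda z).
\]
Fixing any $z_0<0$, the left-hand side tends to $m(-\infty)$ as $\lambda\to\infty$ and to $m(0-)$ as $\lambda\to 0+$, since $\lambda z_0$ tends to $-\infty$ and to $0-$ respectively.

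On the other hand, since $\varphi$ is monotone and admits one-sided limits at $0$ and $\infty$, for each fixed $x>0$ we have $H_\lambda(x)=P_{\varphi(x/\lambda)}\to P_{\varphi(0+)}$ as $\lambda\to\infty$ and $H_\lambda(x)\to P_{\varphi(\infty)}$ as $\lambda\to 0+$; dominated convergence upgrades this to $L^1_{\text{loc}}$ convergence on $[0,\infty)$. The standard continuous dependence of transfer matrices on the coefficient, together with the Weyl disk characterization of $m$, then gives locally uniform convergence on $\C^+$ of $m_\lambda$ to the $m$ function of the limit system $H\equiv P_\alpha$. A short direct computation, already implicit in the discussion of singular intervals in the introduction, shows that for $H\equiv P_\alpha$ the $L^2_H$ solution must point in the direction $e_{\alpha+\pi/2}$, so $m\equiv -\tan\alpha$. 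Hence $m_\lambda\to -\tan\varphi(0+)$ locally uniformly on $\C^+$ as $\lambda\to\infty$, and similarly $m_\lambda\to -\tan\varphi(\infty)$ as $\lambda\to 0+$.

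The final, and technically most delicate, step is to transfer the convergence on $\C^+$ to the real point $z_0<0$. Writing the Herglotz representation
\[
m_\lambda(z)=a_\lambda+b_\lambda z+\int \bigl(\tfrac{1}{t-z}-\tfrac{t}{t^2+1}\bigr)\,d\rho_\lambda(t),
\]
evaluation at $z=i$ yields $a_\lambda\to -\tan\alpha$, $b_\lambda\to 0$, and $\int d\rho_\lambda(t)/(t^2+1)\to 0$. Splitting the integral defining $m_\lambda(z_0)$ over $[0,|z_0|]\cup[|z_0|,\infty)$, using the elementary bound $|1/(t-z_0)-t/(t^2+1)|\le C(z_0)/(t^2+1)$ for $t\ge|z_0|$, and controlling $\rho_\lambda([0,|z_0|])\le (1+|z_0|^2)\int d\rho_\lambda(t)/(t^2+1)\to 0$, one obtains $m_\lambda(z_0)\to -\tan\alpha$. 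Combined with the scaling identity, this proves both limit statements. The degenerate boundary cases $\varphi(0+)=\pi/2$ and $\varphi(\infty)=-\pi/2$ (for which the limit system has $m\equiv\infty$) should be handled by working on the Riemann sphere $\C_\infty$ throughout, with $\tan(\pm\pi/2)=\infty$ interpreted on $\R_\infty$ accordingly.
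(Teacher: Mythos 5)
Your proof is correct, but it takes a genuinely different route from the paper. The paper stays entirely inside the oscillation-theoretic framework it has built: it expresses the $m$ function value through the Pr\"ufer angle of the $L^2_H$ solution, establishes the key Lemma \ref{L2.2} (that this Pr\"ufer angle never crosses $-\pi$), and then pins down $\varphi(0+)$ and $\varphi(\infty)$ by four comparison arguments using the equation $\theta'=-t\sin^2(\theta-\psi)$ together with a rotation trick for the final inequality. You instead exploit the scaling covariance $m_\lambda(z)=m(\lambda z)$ for $H_\lambda(x)=H(x/\lambda)$, observe that $H_\lambda\to P_{\varphi(0+)}$ or $P_{\varphi(\infty)}$ in $L^1_{\mathrm{loc}}$ as $\lambda\to\infty$ or $0+$, invoke the continuity of the map $H\mapsto m$ (which the paper already uses elsewhere, citing \cite[Theorem 5.7(b), Corollary 5.8]{Rembook}), and then transfer the resulting convergence from $\C^+$ to a fixed $z_0<0$ via the Herglotz representation. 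That last transfer is the one nontrivial step, and your estimate is sound: the crucial facts are that all $\rho_\lambda$ live on $[0,\infty)$ (so $1/(t-z_0)-t/(t^2+1)=O(1/(t^2+1))$ uniformly on the support once $t\gtrsim|z_0|$), and that $\Im m_\lambda(i)\to 0$ forces both $b_\lambda\to 0$ and $\int d\rho_\lambda/(1+t^2)\to 0$. Your approach is shorter and arguably more conceptual once the $m$-function continuity theory is taken as given, and it generalizes cleanly; the paper's approach is self-contained within the Pr\"ufer machinery it develops and serves the paper's expository goal of showcasing oscillation theory. The degenerate cases $\varphi(0+)=\pi/2$ or $\varphi(\infty)=-\pi/2$ (limit $m\equiv\infty$) are only gestured at in your writeup --- applying a fixed rotation $R_\gamma$ as in the paper's proof, or replacing $m$ by $-1/m$, would reduce them to the finite case and should be said explicitly --- but this is a small gap rather than an error.
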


Moving on to the more original parts of the paper, we will then prove the following
characterization of semibounded systems with purely discrete spectrum.
\begin{Theorem}
\label{Tess}
Let $H\in\mathcal C_+$, and write $H(x)=P_{\varphi(x)}$, with $\varphi$ chosen as in Theorem \ref{TWW1}.

Then $\sigma_{ess}(H)=\emptyset$ if and only if
\[
\varphi(x)-\varphi(\infty)= o(1/x) \quad \textrm{as }x\to\infty .
\]
\end{Theorem}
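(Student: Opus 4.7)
The plan is to recast discreteness of the spectrum as a statement about the Prüfer angle of solutions of the canonical system. Writing a solution $u(x,E)$ of \eqref{can} with $z = E$ in the form $u = r\,e_\theta$, the angle obeys the Prüfer equation
\[
\theta'(x,E) = E\cos^2(\varphi(x) - \theta(x,E)), \qquad \theta(0,E) = 0,
\]
which is the basic oscillation equation developed earlier as \eqref{ot}. The oscillation theory established in the previous sections then identifies $\sigma_{ess}(H) = \emptyset$ with the finiteness of $\theta(\infty,E) := \lim_{x\to\infty}\theta(x,E)$ for every $E > 0$, since the eigenvalue-counting function satisfies $N(E) = \theta(\infty, E)/\pi + O(1)$. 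Since replacing $\varphi$ by $\varphi + \alpha$ amounts to conjugating the system by a fixed rotation (preserving all spectral information), we may assume $\varphi(\infty) = 0$ throughout; the condition to be verified becomes $\varphi(x) = o(1/x)$.

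The next step is the substitution $\zeta = \theta - \varphi$, which absorbs the monotonicity of $\varphi$:
\[
\zeta'(x) = E\cos^2\zeta + |\varphi'(x)| \ge 0.
\]
Since $\int_0^\infty |\varphi'|\, dx = \varphi(0) - \varphi(\infty) \le \pi$ is finite, $\zeta(\infty)$ is finite if and only if $\int_0^\infty \cos^2\zeta\, dx < \infty$, which forces $\zeta$ to converge to some ``slow zone'' $\pi/2 + k\pi$. Focusing on the first crossing and setting $\delta = \pi/2 - \zeta$ yields the Riccati-type equation
\[
\delta'(x) = -E\sin^2\delta - |\varphi'(x)|.
\]
The model case $\varphi(x) = c/x$ admits an asymptotic solution of the form $\delta = A/x$ exactly when $EA^2 - A + c = 0$ has a real root, i.e.\ iff $Ec \le 1/4$. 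This calculation both motivates the threshold in the theorem and provides the comparison functions for the two directions.

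For sufficiency, given $\varphi(x) = o(1/x)$ and $E > 0$, one has $Ex\varphi(x) < 1/4$ for $x$ large, and a comparison argument produces a positive supersolution of the Riccati equation of the form $A/x$. This keeps $\delta(x) > 0$ for $x$ large, so $\zeta$ converges below the slow-zone barrier, $\theta(\infty,E) < \infty$, and hence $\sigma_{ess}(H) = \emptyset$. For necessity, suppose that there exist $c > 0$ and $x_n \to \infty$ with $\varphi(x_n) \ge c/x_n$, and pick $E > 1/(4c)$. The nonexistence of a positive $A/x$ solution, combined with the forcing contributed by $|\varphi'|$ on windows around each $x_n$, should force $\zeta$ to traverse one additional slow zone near each $x_n$; summing the contributions would then give $\theta(\infty,E) = \infty$, establishing $\sigma_{ess}(H) \supseteq [1/(4c),\infty) \neq \emptyset$.

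The main obstacle lies in the necessity direction. The model $\varphi(x) = c/x$ reveals the threshold $Ec = 1/4$ cleanly, but the hypothesis of the theorem only provides a sparse sequence $x_n$ along which $x\varphi(x)$ stays bounded away from zero; between these points $\varphi$ can decay arbitrarily quickly and, worse, the forcing integral $\int |\varphi'|$ captured by consecutive windows could be very small. The delicate task is to choose the windows around each $x_n$ correctly and carry out a quantitative Riccati comparison guaranteeing at least one fresh crossing of $\delta$ per window, then to verify that these crossings truly accumulate into infinite total rotation rather than being absorbed by isolated eigenvalues of the compact-resolvent part of the spectrum.
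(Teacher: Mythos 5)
Your overall strategy coincides with the paper's: reformulate discreteness of the spectrum as finiteness of $\lim_{x\to\infty}\theta(x;t)$ for every $t>0$ via Theorem \ref{T2.1}, normalize $\varphi(\infty)$ by a rotation, then run a Riccati/Euler comparison whose explicit solutions reveal the threshold. (The paper actually proves the stronger quantitative Theorem \ref{T4.1}, $1/(4A)\le\min\sigma_{ess}\le 1/A$ with $A=\limsup x(\varphi(x)-\varphi(\infty))$, and deduces Theorem \ref{Tess} from it.) Your sufficiency argument and your model computation showing that $\delta=A/x$ solves the linearized Riccati iff $Ec\le 1/4$ are essentially what the paper does — this is exactly the Euler-equation analysis around \eqref{4.3}--\eqref{4.4}.

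The necessity direction, however, has a genuine gap, as you yourself flag. The obstacle you describe — that $x\varphi(x)$ need only be bounded away from zero along a sparse sequence $x_n$, and that the forcing $-d\varphi$ on consecutive windows could be tiny — disappears once you use a simple fact you seem to have overlooked: $\varphi$ is \emph{monotone decreasing}. If $\varphi(b)-\varphi(\infty)\ge B/b$ for some large $b$, then $\varphi(x)-\varphi(\infty)\ge B/b$ for \emph{all} $x\in[a,b]$. That is, you do not need a local rate of decrease; monotonicity converts a single lower bound at one point $b$ into a constant lower bound on the whole interval $[a,b]$. The Prüfer equation can then be compared on $[a,b]$ with the elementary autonomous Riccati equation $\theta_1'=t(1-\epsilon)(\theta_1-B/b)^2$, $\theta_1(a)=\theta_0<0$, whose explicit solution $\theta_1(x)=B/b-[\,(B/b-\theta_0)^{-1}+(1-\epsilon)t(x-a)\,]^{-1}$ crosses zero before $x=b$ as soon as $t(1-\epsilon)B>1$ and $b$ is taken large enough. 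Since one such crossing can be produced starting from any $a$, one gets $\theta(x;t)\to\infty$, hence $t\ge\min\sigma_{ess}$. No window construction, no bookkeeping of $\int|\varphi'|$, and no concern about ``absorption by isolated eigenvalues'' — the Prüfer angle is monotone in $x$ for $t>0$, so crossings accumulate automatically.

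A secondary issue worth noting: the substitution $\zeta=\theta-\varphi$ tacitly assumes $\varphi$ is absolutely continuous. In general $d\varphi$ has a singular part, and $\int_0^\infty|\varphi'|\,dx$ need not equal $\varphi(0+)-\varphi(\infty)$; your equation $\zeta'=E\cos^2\zeta+|\varphi'|$ is then incorrect as written. The paper avoids this entirely by keeping $\theta$ as the unknown and bounding $\sin^2(\theta-\psi(x))$ above or below by $(\theta-B/x)^2$ or $(\theta-B/b)^2$, a pointwise estimate that makes no reference to the derivative of $\varphi$.
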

This will actually be a consequence of more general results on the location of the bottom
of the essential spectrum, which we will state and prove in Section 4. These will also imply
part (a) of the following result.
\begin{Theorem}
\label{T1.3}
Let $H\in\mathcal C_+$, and write $H(x)=P_{\varphi(x)}$, with $\varphi$ chosen as in Theorem \ref{TWW1}.

(a) Then $0\in\sigma_{ess}(H)$ if and only if
\[
\limsup_{x\to\infty} x(\varphi(x)-\varphi(\infty)) = \infty .
\]

(b) $0$ is an eigenvalue if and only if $\varphi(x)+\pi/2\in L^2(0,\infty)$.
\end{Theorem}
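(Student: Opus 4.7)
The plan is to handle the two parts quite differently: (b) should fall out from an explicit calculation with the solutions of \eqref{can} at $z=0$, while (a), as the authors announce, is reduced to the more general bottom-of-the-essential-spectrum results to be proved in Section 4, which I expect to access through oscillation theory.

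For (b), at $z=0$ the equation becomes $Ju'\equiv 0$, so solutions are constant, and the boundary condition $u_2(0)=0$ leaves $u(x)\equiv\binom{c}{0}$. Such a $u$ is a genuine eigenfunction iff it is a nonzero element of $L^2_H$ of finite norm, that is,
\[
0<\int_0^\infty \cos^2\varphi(x)\,dx<\infty.
\]
Setting $\epsilon(x)=\varphi(x)+\pi/2\in[0,\pi]$, one has $\cos^2\varphi=\sin^2\epsilon$, and $\sin^2\epsilon\le\epsilon^2$ everywhere while $\sin^2\epsilon\sim\epsilon^2$ wherever $\epsilon$ is small. Since $\varphi$ is monotone, $\epsilon$ has a limit $L\in[0,\pi]$ at infinity. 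If $L\in(0,\pi)$, both $\cos^2\varphi\in L^1$ and $\epsilon\in L^2$ fail; if $L=0$, both conditions boil down to $\epsilon^2\in L^1$ at infinity, so they are equivalent. The only subtle point is the degenerate $L=\pi$ case (so $\varphi\equiv\pi/2$ and $H\equiv P_{\pi/2}$), where the "eigenfunction" is the zero element of $L^2_H$ and where the right representative $\varphi$ in Theorem \ref{TWW1} must be chosen for the equivalence to read correctly.

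For (a), the natural oscillation-theoretic hook is the Pr\"ufer angle $\theta(x,z)$ of $u=r e_\theta$, which satisfies
\[
\theta'(x,z)=z\cos^2\bigl(\theta(x,z)-\varphi(x)\bigr),
\]
so that $\theta$ is monotone increasing for $z>0$. I expect Section 4 to give an identity of the shape $\inf\sigma_{ess}(H)=\sup\{z>0:\theta(\infty,z)<\infty\}$, reducing (a) to showing that this supremum is positive iff $\limsup x(\varphi(x)-\varphi(\infty))<\infty$. At the heart is a shadowing argument: setting $\psi=\theta-\varphi-\pi/2$ one gets $\psi'=z\sin^2\psi-\varphi'$, where the nonnegative forcing $-\varphi'$ competes with the quadratic restoring term $z\sin^2\psi$. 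When $\varphi(x)-\varphi(\infty)=O(1/x)$, the tail $\int_X^\infty(-\varphi')\,dx=\varphi(X)-\varphi(\infty)$ is $O(1/X)$ and should be absorbed by the restoring term for small enough $z$, yielding bounded $\psi$ and hence bounded $\theta$; when the limsup is infinite, the intermittent excess in $-\varphi'$ outpaces the restoring term for every $z>0$. The main obstacle I foresee is making this shadowing argument quantitative and uniform near the borderline $\varphi(\infty)=-\pi/2$, where the moving equilibrium $\varphi+\pi/2$ coincides modulo $\pi$ with the initial angle $\theta(0)=0$; I expect the authors to sidestep these technicalities by routing the argument through the general Section 4 machinery rather than analyzing the ODE for $\psi$ directly.
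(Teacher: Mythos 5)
Your proposal matches the paper's approach: part (b) is indeed dispatched by the elementary observation that $z=0$ solutions are constant, so $u\equiv(c,0)^t$ is a nonzero $L^2_H$ eigenfunction precisely when $0<\int_0^\infty\cos^2\varphi<\infty$, which reduces to $\varphi+\pi/2\in L^2$ via $\cos\varphi=\sin(\varphi+\pi/2)$ (and you correctly flag that the degenerate $H\equiv P_{e_2}$ case needs the normalized representative $\varphi\equiv\pi/2$). Part (a) is, exactly as you anticipate, not proved directly but obtained as the $A=\infty$ versus $A<\infty$ dichotomy from the two-sided bound $\frac{1}{4A}\le\min\sigma_{ess}\le\frac{1}{A}$ of Theorem \ref{T4.1}, whose proof uses the oscillation-theoretic characterization \eqref{4.1} and Riccati/Euler comparison equations much as you sketch.
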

Part (b) is trivial since the solutions of \eqref{can} at $z=0$ are constant; it is just stated for
completeness here. A combination of both parts of the Theorem gives a description of those
$H\in\mathcal C_+$ whose spectrum starts at zero.

We will also discuss in Section 4 how Theorem \ref{Tess}
contains a new version of Molchanov's \cite{Mol}
well known criterion for the
absence of essential spectrum for a \textit{Schr{\"o}dinger operator }$-d^2/dx^2+V(x)$ as
a special case; see Theorem \ref{T4.3} below for more details.

We then round off our analysis of semibounded canoncial systems by discussing
the exponential orders of the solutions of \eqref{can}, as functions
of $z\in\C$. Here we can be brief since the relevant tools
are all available in the literature \cite{PRW,Rom}, in a slightly different context.

Basically, we will exploit the fact that \eqref{can} for $H\in\mathcal C_+$ can be related to a diagonal
canonical system; this connection is very well known for the smaller class
of Krein strings; see, for example, \cite{KalWW}. We give a direct treatment of this transformation that
never mentions strings explicitly (though of course it is informed by this connection),
and this aspect of our analysis might be of some independent interest also.
The problem of determining the order of a diagonal canonical system has been studied in depth
in \cite{Rom}.

Let's now formulate a result that summarizes the main points.
We define the \textit{transfer matrix }$T(x;z)$ as usual as the $2\times 2$ matrix solution
of \eqref{can} with the initial value $T(0;z)=1$. Its entries are entire functions of $z\in\C$ for each fixed $x\ge 0$,
and one can show that all four entries of $T$ have the same order. Essentially, this will follow from the quotients
being Herglotz functions; see the corresponding part of the
proof of \cite[Theorem 4.19]{Rembook} for a discussion of a very similar statement.

Recall also that the \textit{order }of an entire function $F(z)$ is defined as the infimum of the
$\alpha>0$ for which the estimate $|F(z)|\lesssim \exp(|z|^{\alpha})$ holds.
Clearly, for an arbitrary canonical system, we always have $\textrm{ord}\:T(x;z)\le 1$,
by a simple Gronwall estimate applied to \eqref{can}. Exactly the orders
between $0$ and $1/2$ occur for \textit{semibounded }canonical systems.
\begin{Theorem}
\label{Torder}
Let $H\in\mathcal C_+$, and write $H(x)=P_{\varphi(x)}$ with $\varphi(x)$ chosen as in Theorem \ref{TWW1}.

(a) $\textrm{\rm ord}\: T(x;z)\le 1/2$ for all $x\ge 0$.

(b) Conversely, for any $0\le\nu\le 1/2$, there are semibounded canonical systems $H\in\mathcal C_+$
with $\textrm{\rm ord}\:T(x;z;H)=\nu$ for some $x>0$.

(c) If $\textrm{\rm ord}\: T(L;z)<1/2$, then $\varphi'(x)=0$ for almost every $x\in (0,L)$.
\end{Theorem}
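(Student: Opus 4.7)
\emph{Plan.} My strategy is to reduce each of (a), (b), (c) to the corresponding question for a \emph{diagonal} canonical system, to which the results of \cite{Rom} (and ancillary material in \cite{PRW}) apply. The paragraph preceding the theorem announces a direct transformation, developed in the body of the paper without explicit reference to strings, that accomplishes this reduction; I would take it as the principal tool.

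First, I would set up this substitution explicitly: an $x$-dependent linear change of the dependent variable composed with a reparameterization of the independent variable, chosen so that the transformed coefficient $\tilde H$ is diagonal with $\tr\tilde H=1$. Under this substitution, $Ju'=-zHu$ with $H(x)=P_{\varphi(x)}$ and $\varphi$ decreasing becomes $J\tilde u'=-z\tilde H(y)\tilde u$ with $\tilde H(y)=\operatorname{diag}(a(y),b(y))$, $a+b=1$, on a new parameter interval. Monotonicity of $\varphi$ is what keeps the diagonal weights nonnegative. The key property to verify is that, for each fixed $x$, the transfer matrices are related by
\[
T(x;z;H)=A\,T(y(x);z;\tilde H)\,B
\]
with $z$-independent invertible matrices $A,B$, so that the entire functions $z\mapsto T(x;z;H)$ and $z\mapsto T(y(x);z;\tilde H)$ have the same order; singular intervals of $H$ and $\tilde H$ should also correspond in an obvious way.

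With this reduction in hand, part (a) is immediate from the classical upper bound of $1/2$ on the order of transfer matrices of diagonal canonical systems, which is built into \cite{Rom}. For part (b), for each $\nu\in[0,1/2]$, I would take a diagonal canonical system whose transfer matrix has order $\nu$ at some parameter value (such examples are provided by \cite{Rom,PRW}) and pull it back via the inverse substitution to obtain an $H\in\mathcal{C}_+$ realizing the same order. For part (c), I would invoke the converse-direction statement in \cite{Rom}: when the order of a diagonal canonical system's transfer matrix drops strictly below $1/2$ on an initial interval, the diagonal coefficient is (essentially) concentrated in a single direction there. Translated back via the substitution, this forces $\varphi$ to be constant a.e.\ on $(0,L)$, i.e., $\varphi'(x)=0$ there.

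The main obstacle is making the reduction clean enough that the order information survives it intact. The substitution is not a diffeomorphism across singular intervals of $H$, and the boundary behavior at $x=0$ and at any accumulation points of singular intervals needs careful handling; in particular, verifying that the conjugating matrices $A,B$ above are genuinely independent of $z$ is what underpins the invariance of order. Once this is settled, each of (a)--(c) is essentially a direct appeal to \cite{Rom}; attacking (c) head-on, without the diagonal reduction, looks considerably harder, as it would require an analogue of the sharpness in \cite{Rom} formulated intrinsically for $P_\varphi$-systems.
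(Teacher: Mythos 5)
Your reduction to a diagonal canonical system is indeed the paper's strategy, and the substitution $t=-\tan\varphi(x)$ followed by a time change is exactly how the paper carries it out. But the load-bearing step is missing, and as written the plan for part (a) does not work. You claim $T(x;z;H)=A\,T(y(x);z;\tilde H)\,B$ with $z$-independent $A,B$, and then appeal to ``the classical upper bound of $1/2$ on the order of transfer matrices of diagonal canonical systems.'' There is no such bound: a trace-normed diagonal canonical system in the spectral parameter $z$ has order up to $1$, exactly like a general canonical system, so an order-preserving conjugation to a diagonal system proves nothing stronger than $\textrm{ord}\le 1$, which we already knew by Gronwall. The mechanism that actually produces the bound $1/2$ in the paper is the quadratic change of spectral variable $z=\zeta^2$: after the preliminary shear $y(t)=\bigl(\begin{smallmatrix}1&-t\\0&1\end{smallmatrix}\bigr)v(t)$ the equation has the form \eqref{5.3} with $z$ multiplying only one of the two off-diagonal measures, and conjugating further by the $\zeta$-dependent matrix $\operatorname{diag}(\zeta,1)$ turns it into a bona fide canonical system in $\zeta$, namely \eqref{5.5}. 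That system has order at most $1$ in $\zeta$, hence order at most $1/2$ in $z=\zeta^2$. So the conjugation is emphatically not $z$-independent, and without the squaring you cannot get below $1$.

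Once you put the $z=\zeta^2$ substitution in, the rest of your outline is reasonable but diverges from the paper in ways worth noting. For (c), the paper does not need the converse direction of \cite{Rom}: it simply observes that if $\textrm{ord}\,T(L;z)<1/2$, then the $\zeta$-transfer matrix $p$ has order $<1$ and hence exponential type zero, and then de Branges's type formula $\tau=\int_0^T\sqrt{h(1-h)}\,dS$ forces $h\in\{0,1\}$ a.e., i.e.\ $dw$ (equivalently $d\varphi$) is purely singular. This is more self-contained than invoking \cite{Rom}. For (b), the paper deliberately avoids \cite{Rom} and instead builds the example directly, taking $A(z)=\prod(1-z/n^\alpha)$ with $\alpha=1/\nu$, pairing it with a $C$ having interlacing zeros, checking the $H^2$ condition, and invoking the inverse spectral theorem; pulling back examples from \cite{Rom} through the diagonal correspondence would also work, but it is a genuinely different and heavier route. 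You should also be a bit careful with the domain restriction: the substitution $t=-\tan\varphi(x)$ requires $\varphi$ bounded away from $\pm\pi/2$, and the paper handles the general case by splitting the interval and rotating, a reduction you would also need.

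Finally, your worry about the substitution failing to be a diffeomorphism across singular intervals is well placed but is resolved cleanly in the paper: jumps of $\varphi$ correspond to $t$-intervals where $w$ has no mass (so $v$ is constant there), and intervals of constancy of $\varphi$ correspond to point masses of $w$ (so $v$ has a jump). Spelling this out is what makes the order-preservation argument rigorous.
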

Recall that $\varphi$ is a decreasing function, so will be differentiable at almost every $x$.
Since the pointwise derivative computes the Radon-Nikodym derivative of the absolutely continuous part
of the measure $-d\varphi$,
another way of stating part (c) is to say that this measure must be purely singular on $(0,L)$
if $\textrm{\rm ord}\: T(L;z)<1/2$.

One can in principle go beyond this by referring to \cite[Theorem 2]{Rom}, but this will become
intricate and the resulting criteria will probably not be easy to check for a given $\varphi$.
What we have stated here will be comparatively easy to prove, and we present these arguments
in Section 5. We will also give an easy direct argument for part (b), which will not
depend on \cite[Theorem 2]{Rom}.
\section{Oscillation theory}
Given a non-trivial solution $u$ of \eqref{can} for $z=t\in\R$, introduce $R(x)>0$, $\theta(x)$ by writing
$u=Re_{\theta}$, with $\theta(x)$ continuous and, as above, $e_{\theta}=(\cos\theta,\sin\theta)^t$.
Then the \textit{Pr{\"u}fer angle }$\theta(x)$ is in fact absolutely continuous and solves
\begin{equation}
\label{ot}
\theta'(x) = te^*_{\theta(x)}H(x)e_{\theta(x)} .
\end{equation}

We will also consider the problems on bounded intervals $[0,L]$, and then we impose the boundary condition
\begin{equation}
\label{bcbeta}
e^*_{\beta}Ju(L) = u_1(L) \sin\beta - u_2(L)\cos\beta = 0
\end{equation}
at $x=L$, with $0\le\beta<\pi$. This, together with the boundary condition \eqref{bc} at $x=0$, defines
a self-adjoint relation $\mathcal S_L^{(\beta)}$ on $L^2_H(0,L)$; see again \cite[Chapter 2]{Rembook} for more details.
\begin{Proposition}
\label{P2.1}
Let $\theta(x;t)$ be a solution of \eqref{ot} with $t$ independent initial value
$\theta(0;t)=\alpha$.
Then $\theta(x;t)$ is an increasing function of $t\in\R$, and as a function of $x\ge 0$,
the Pr{\"u}fer angle $\theta(x;t)$ is increasing if $t\ge 0$ and decreasing if $t\le 0$.

In fact, $t\mapsto\theta(x;t)$ is strictly increasing for $x>0$ unless $(0,x)$ is contained in a singular interval
of type $\alpha+\pi/2$.
\end{Proposition}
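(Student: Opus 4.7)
The monotonicity in $x$ is immediate from \eqref{ot}: since $H(x)\ge 0$ we have $e^*_{\theta}H(x)e_{\theta}\ge 0$, so the right-hand side of \eqref{ot} has the sign of $t$.

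For monotonicity in $t$, I fix $t_1<t_2$, set $\theta_j(x)=\theta(x;t_j)$ and $\Delta(x)=\theta_2(x)-\theta_1(x)$, so that $\Delta(0)=0$. The key is the decomposition
\[
\Delta'(x) \;=\; t_1\bigl(e^*_{\theta_2}He_{\theta_2}-e^*_{\theta_1}He_{\theta_1}\bigr) + (t_2-t_1)\,e^*_{\theta_2}H e_{\theta_2} .
\]
Since $H\ge 0$ and $\tr H(x)=1$, the map $\theta\mapsto e^*_{\theta}H(x)e_{\theta}$ is uniformly Lipschitz with some constant $C$, so the first summand is at least $-C|t_1|\,|\Delta|$ while the second is non-negative. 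Feeding $\Delta'(x)\ge -C|t_1|\,|\Delta(x)|$ together with $\Delta(0)=0$ into a standard Gr{\"o}nwall argument applied to $\max(-\Delta,0)$ then gives $\Delta(x)\ge 0$ for every $x\ge 0$.

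For the strict version, suppose $\Delta(x_0)=0$ for some $x_0>0$. Knowing now that $\Delta\ge 0$, I may drop the absolute value and multiply the above inequality by an integrating factor to obtain
\[
(e^{C|t_1|x}\Delta(x))' \;\ge\; (t_2-t_1)\,e^{C|t_1|x}\,e^*_{\theta_2(x)}H(x)e_{\theta_2(x)} \;\ge\; 0 ,
\]
so $e^{C|t_1|x}\Delta(x)$ is non-decreasing and vanishes at both endpoints of $[0,x_0]$; hence it is identically zero there, and integrating the above inequality forces $e^*_{\theta_2(x)}H(x)e_{\theta_2(x)}=0$ for a.e.\ $x\in(0,x_0)$. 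Since $H\ge 0$, this gives $H(x)e_{\theta_2(x)}=0$ a.e.; combined with $\Delta\equiv 0$ and \eqref{ot} it also makes $\theta_1'\equiv 0$ a.e., so $\theta_1(x)=\theta_2(x)\equiv\alpha$ on $[0,x_0]$. Therefore $H(x)e_{\alpha}=0$ a.e., which together with $H\ge 0$ and $\tr H=1$ pins down $H(x)=P_{\alpha+\pi/2}$ a.e.\ on $(0,x_0)$; equivalently, $(0,x_0)$ lies inside a singular interval of type $\alpha+\pi/2$. The converse is trivial: on such an interval \eqref{ot} reduces to $\theta'=t\sin^2(\theta-\alpha)$, whose unique solution through $\alpha$ is $\theta\equiv\alpha$ regardless of $t$. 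The one thing to track carefully in this plan is the two-stage Gr{\"o}nwall trick—first to establish $\Delta\ge 0$, then to use that sign to upgrade the one-sided bound to an equality on $[0,x_0]$—after which the algebraic step identifying $H=P_{\alpha+\pi/2}$ from $He_\alpha=0$ and $\tr H=1$ is routine.
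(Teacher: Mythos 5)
Your proof is correct, and for the strict-monotonicity part it takes a genuinely different route from the paper. The monotonicity in $x$ is handled identically. For monotonicity in $t$ the paper simply cites Hartman's comparison principle for first-order ODEs, which your Lipschitz-plus-Gr{\"o}nwall argument essentially re-derives; the fact that $\|H(x)\|\le\tr H(x)=1$ makes the Lipschitz constant uniform in $x$, so this is fine. The real difference is in the strictness. The paper argues spectrally: if $t\mapsto\theta(L;t)$ were constant on a whole interval $[a,b]$, then every $t\in[a,b]$ would be an eigenvalue of the self-adjoint problem on $(0,L)$ with boundary condition $\beta\equiv\theta(L;a)\bmod\pi$, which is impossible (a self-adjoint relation on a separable space has only countably many eigenvalues) unless the candidate eigenfunctions satisfy $Hu\equiv0$ on $(0,L)$, whence $u\equiv e_\alpha$ and $H=P_{\alpha+\pi/2}$. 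You instead exploit the sign information already established: once $\Delta\ge0$ is known, the integrating-factor estimate makes $e^{C|t_1|x}\Delta(x)$ non-decreasing, and $\Delta(0)=\Delta(x_0)=0$ then forces $\Delta\equiv0$ on $[0,x_0]$ \emph{and} $e_{\theta_2}^*He_{\theta_2}=0$ a.e.\ there, after which the equation pins $\theta_1=\theta_2\equiv\alpha$ and then $He_\alpha=0$, i.e.\ $H=P_{\alpha+\pi/2}$. Your route is entirely elementary ODE analysis and does not need the spectral machinery (self-adjoint relations on $L^2_H(0,L)$, countability of point spectrum); it also only requires two parameter values $t_1<t_2$ rather than a whole interval of constancy, which is a very minor but pleasant simplification. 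The paper's argument is shorter to state given that the spectral framework is already in place, and it foreshadows the way oscillation theory and spectral projections interact throughout the rest of the paper.

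Two small points worth keeping in mind if you write this up formally: the first-stage Gr{\"o}nwall step applied to $\Delta^-=\max(-\Delta,0)$ uses that $\Delta^-$ is absolutely continuous with $(\Delta^-)'=-\Delta'\mathbf{1}_{\{\Delta<0\}}$ a.e., which is standard but should be said; and the final identification $He_\alpha=0$, $\tr H=1$, $H\ge0\Rightarrow H=P_{\alpha+\pi/2}$ uses that the kernel cannot be all of $\R^2$ since $\tr H=1\ne0$, which you correctly treat as routine.
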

\begin{proof}
The first few claims are immediate from \eqref{ot}; for the montonicity in $t$, we refer to the comparison principle
\cite[Section III.4]{Hart} for first order ODEs.

If $t\mapsto \theta(L;t)$ were constant on some interval $a\le t\le b$, for some $L>0$, then the
corresponding solutions $u(x;t)$ would be candidate eigenfunctions, with eigenvalue $t$, of the problem
on $(0,L)$ with boundary condition $\beta\equiv\theta(L;a)\bmod \pi$ at $x=L$.
A contradiction can only be avoided if $Hu=0$ on $(0,L)$ for these $u$, and this
makes $H=P_{\alpha+\pi/2}$ there.
\end{proof}
By this monotonicity, the Pr{\"u}fer angle $\theta(L;t)$ can be used to count how many times
the boundary condition \eqref{bcbeta} was satisfied. This in turn lets us locate the spectrum.

We start with the problem on a bounded interval $[0,L]$, with boundary condition \eqref{bcbeta}.
We denote the spectral projections of the associated self-adjoint operator $S_L^{(\beta)}$
(extracted from the relation $\mathcal S_L^{(\beta)}$ by dividing out the multi-valued part) by $E_L^{(\beta)}$,
and we use the short-hand notation
$\dim P$ for what is really the dimension of the \textit{range }of the projection $P$.
We will also write $E(s,t)$ instead of the more precise $E((s,t))$,
and similarly for other types of intervals, to avoid an aesthetically offensive proliferation
of parentheses.
\begin{Lemma}
\label{L2.1}
Let $\theta(x;t)$ be the solution of \eqref{ot} with $\theta(0;t)=0$. Then
\[
\dim E_L^{(\beta)}[s,t) = \left\lceil \frac{1}{\pi} \left( \theta(L;t)-\beta \right) \right\rceil -
\left\lceil \frac{1}{\pi} \left( \theta(L;s)-\beta \right) \right\rceil .
\]
\end{Lemma}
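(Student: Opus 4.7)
The approach is to translate both boundary conditions into Pr{\"u}fer-angle conditions, identify the eigenvalues of $S_L^{(\beta)}$ with the $t$-values at which $\theta(L;t)\equiv\beta\pmod\pi$, and then invoke the monotonicity of $\theta(L;\cdot)$ from Proposition \ref{P2.1} to count them.

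First I would recast the boundary conditions in Pr{\"u}fer coordinates. For a non-trivial solution $u=Re_\theta$ of \eqref{can} at $z=t$, the condition $u_2(0)=0$ amounts to $\sin\theta(0;t)=0$, which is consistent with the normalization $\theta(0;t)=0$. At $x=L$, the identity $Je_\theta=e_{\theta+\pi/2}$ gives $e_\beta^*Ju(L)=\pm R\sin(\theta(L;t)-\beta)$, so the boundary condition \eqref{bcbeta} is equivalent to $\theta(L;t)\equiv\beta\pmod\pi$. Hence the finite eigenvalues of the relation $\mathcal S_L^{(\beta)}$ are precisely those $t\in\R$ satisfying $\theta(L;t)=\beta+k\pi$ for some $k\in\Z$, and each such eigenvalue has a one-dimensional solution space since the initial direction at $x=0$ is pinned.

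By Proposition \ref{P2.1}, $t\mapsto\theta(L;t)$ is continuous and monotone, and strictly increasing unless $(0,L)$ is contained in a singular interval of type $\pi/2$. In the non-degenerate regime each target $\beta+k\pi$ is attained at most once, and the corresponding $u(\cdot;t)$ is non-null in $L^2_H$: otherwise $Hu\equiv 0$, so $u'\equiv 0$ by \eqref{can}, so $u\equiv e_0$ by $u_2(0)=0$, and then $He_0\equiv 0$ forces exactly the excluded configuration. Thus the eigenvalues of $S_L^{(\beta)}$ in $[s,t)$ are in bijection with the integers $k$ satisfying $\theta(L;s)\le\beta+k\pi<\theta(L;t)$; a standard count gives $\lceil(\theta(L;t)-\beta)/\pi\rceil-\lceil(\theta(L;s)-\beta)/\pi\rceil$, as claimed. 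In the degenerate case, the solution with $\theta(0;t)=0$ is $u\equiv e_0$, hence $\theta(L;t)\equiv 0$ and the right-hand side vanishes; on the operator side, any $(u,v)\in\mathcal S_L^{(\beta)}$ has $u_2'\equiv 0$ with $u_2(0)=0$, so $u_2\equiv 0$ and $u$ is null, giving $\dim E_L^{(\beta)}[s,t)=0$ as well.

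The step I expect to be the main obstacle is the careful bookkeeping between the self-adjoint relation $\mathcal S_L^{(\beta)}$ and the operator $S_L^{(\beta)}$ obtained by quotienting out the multi-valued part: one must make sure that Pr{\"u}fer-angle eigenvalues in the generic regime really do correspond to non-null eigenfunctions, and dispose of the degenerate regime separately. Once that is in order, the monotonicity argument and the ceiling arithmetic are routine.
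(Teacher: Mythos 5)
Your proposal is correct and follows essentially the same route as the paper: identify eigenvalues with $\theta(L;\lambda)\equiv\beta\bmod\pi$, invoke the monotonicity and continuity of $t\mapsto\theta(L;t)$ from Proposition~\ref{P2.1} to count them via ceiling arithmetic, and dispose of the degenerate case $H\equiv P_{\pi/2}$ on $(0,L)$ separately. You spell out the non-nullity of eigenfunctions and the degenerate case in slightly more detail than the paper, but the argument is the same.
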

The dimension of the spectral projection of course equals the number of eigenvalues
in $[s,t)$.
\begin{proof}
The eigenvalues $\lambda$ are characterized by the condition $\theta(L;\lambda)\equiv\beta \bmod\pi$.
Now the monotonicity and continuity of $t\mapsto \theta(L;t)$ make it clear that
$\lceil (\theta(L;t) -\beta)/\pi\rceil$ jumps
by $1$ at each eigenvalue and is constant on the intervals between those.

This argument does not literally apply when $(0,L)$ is a singular interval of type $\pi/2$, but this scenario is trivial
and the claim can then be checked directly; all spectral projections are zero in this case.
\end{proof}
\begin{Theorem}
\label{T2.1}
Suppose that $(0,\infty)$ does not end with a singular half line $(L,\infty)$, write $E$
for the spectral projection of the half line operator, and
let $\theta(x;t)$ be the solution of \eqref{ot} with $\theta(0;t)=0$. Then
\begin{equation}
\label{2.3}
\dim E(s,t) = \lim_{L\to\infty} \left\lfloor \frac{1}{\pi} \left( \theta(L;t)-\theta(L;s)\right)
\right\rfloor .
\end{equation}
\end{Theorem}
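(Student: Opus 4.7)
The plan is to combine Lemma \ref{L2.1} with strong resolvent convergence of the finite-interval operators $S_L^{(\beta)}$ to the half-line operator $S$ as $L \to \infty$. The hypothesis that $(0,\infty)$ does not end with a singular half line rules out the degenerate situation in which $\theta(L;t)$ eventually stagnates and guarantees that the standard truncation argument yields strong resolvent convergence for \emph{any} fixed $\beta\in[0,\pi)$; I would cite \cite{Rembook} for this point.

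First, I would use Lemma \ref{L2.1} to rewrite the right-hand side of \eqref{2.3} as a finite-interval eigenvalue count modulo a bounded error. Indeed, for any $\beta\in[0,\pi)$,
\[
\dim E_L^{(\beta)}[s,t) = \left\lceil \frac{\theta(L;t)-\beta}{\pi} \right\rceil - \left\lceil \frac{\theta(L;s)-\beta}{\pi} \right\rceil ,
\]
and an elementary estimate on the ceiling function shows that this integer differs from $\lfloor (\theta(L;t)-\theta(L;s))/\pi \rfloor$ by at most $1$, uniformly in $L$ and $\beta$. Hence it is enough to prove $\dim E_L^{(\beta)}(s,t) \to \dim E(s,t)$ as $L\to\infty$, at least for $s,t\notin\sigma_p(S)$.

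Next, for such $s,t$, strong resolvent convergence gives strong convergence of the spectral projections $E_L^{(\beta)}(s,t) \to E(s,t)$. If $\dim E(s,t)=\infty$, lower semi-continuity of the rank under strong convergence of projections forces $\dim E_L^{(\beta)}(s,t)\to\infty$; if $\dim E(s,t)=N<\infty$, then $(s,t)$ contains no essential spectrum and meets $\sigma(S)$ in finitely many isolated eigenvalues of total multiplicity $N$, which are preserved in the limit, so $\dim E_L^{(\beta)}(s,t)=N$ for all large $L$. To treat arbitrary $s,t\in\R$, I would use countability of $\sigma_p(S)$ to approach $s$ and $t$ from outside the point spectrum, relying on the monotonicity of $t\mapsto\theta(L;t)$ from Proposition \ref{P2.1} to pass to the limit in \eqref{2.3}.

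The main obstacle is the upper bound $\limsup_L \dim E_L^{(\beta)}(s,t) \le \dim E(s,t)$ in the finite-dimensional case: strong resolvent convergence delivers lower semi-continuity of rank for free, so one must argue separately that no spurious eigenvalues of $S_L^{(\beta)}$ accumulate in a spectral gap of $S$. This is the single nontrivial analytic input beyond Lemma \ref{L2.1}; once it is in place the proof assembles immediately.
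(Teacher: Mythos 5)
Your outline correctly identifies the shape of the problem, but it has a genuine gap at exactly the step you yourself flag as ``the single nontrivial analytic input'': you assert that for $\dim E(s,t)=N<\infty$ one gets $\dim E_L^{(\beta)}(s,t)=N$ for large $L$, and then two sentences later admit that the needed upper semicontinuity (no spurious eigenvalues accumulating in a gap of $S$) is not supplied by strong resolvent convergence and is left unproved. That missing inequality is not a finishing touch; it is the main content of the theorem. In the paper it is inequality \eqref{2.1}, and it is proved by a careful variational argument adapted from \cite[Chapter 14]{WMLN}: one chooses the boundary condition $\beta=\beta(L)$ depending on $L$ so that $t$ is an eigenvalue of $S_L^{(\beta)}$, which makes Lemma \ref{L2.1} give exactly $\dim E_L^{(\beta)}[s,t]=\lfloor F(L)\rfloor+1$; then one supposes \eqref{2.1} fails, produces from the surplus dimension a nonzero $f\in R(E_L^{(\beta)}[s,t])$ vanishing at $L$, extends it by zero to the half line, and reaches a contradiction by comparing $\|(S-c)f\|$ computed on $(0,L)$ and on $(0,\infty)$. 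This step requires delicate handling of the multi-valued parts of the relations $\mathcal S_L^{(\beta)}$ and $\mathcal S$, which your sketch does not touch at all.

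A secondary issue: you work with a \emph{fixed} $\beta$ and accept a $\pm 1$ discrepancy between $\dim E_L^{(\beta)}[s,t)$ and $\lfloor F(L)\rfloor$, intending to absorb it in a soft limiting argument. But \eqref{2.3} is an exact integer identity, and the $\pm 1$ slack is not harmless: your scheme would then require passing to the limit at continuity points of $\dim E(\cdot,\cdot)$ and recovering the endpoints by a separate approximation using countability of $\sigma_p(S)$, and it is not clear this can be made to yield equality rather than a two-sided bound off by one. The paper avoids this entirely by letting $\beta$ float with $L$ so that the floor/ceiling bookkeeping is exact; in particular the easy direction \eqref{2.2} is proved by choosing $\beta_n$ so that $s$ is an eigenvalue on $[0,L_n]$, implementing this as a singular half line $(L_n,\infty)$, and invoking convergence of the modified coefficient functions in the metric of \cite[Section 5.2]{Rembook}, which gives weak-$*$ convergence of the spectral measures. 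Your invocation of ``strong resolvent convergence for any fixed $\beta$, cite \cite{Rembook}'' is plausible but is not the route the paper takes, and even if granted, it only supplies the direction you already had.
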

The existence of the limit, with the understanding that it may equal infinity,
is part of the statement.

If $(0,\infty)$ does end with a singular half line $(L,\infty)$ of type $\gamma$, say,
then we are effectively dealing with the problem on $(0,L)$ with boundary condition $\beta=\gamma+\pi/2$
at $x=L$ \cite[Theorem 3.18]{Rembook}, so we are back in the case already dealt with in Lemma \ref{L2.1}.
\begin{proof}
Let's abbreviate the expression from the statement by
\[
F(L) = \frac{1}{\pi} \left( \theta(L;t)-\theta(L;s)\right) .
\]
We will establish the following two inequalities:
\begin{align}
\label{2.1}
& \lfloor F(L) \rfloor \le \dim E(s,t)\quad
\textrm{for all }L>0 ;\\
\label{2.2}
& \dim E(s,t) \le \liminf_{L\to\infty} \lceil F(L) \rceil - 1 .
\end{align}

Let's first check that these inequalities will imply \eqref{2.3}: clearly,
\begin{align*}
\liminf_{L\to\infty} \lceil F(L) \rceil - 1 & \le \limsup_{L\to\infty} \lceil F(L) \rceil - 1 \\
& \le \sup_{L>0} \lceil F(L) \rceil - 1 \le \sup_{L>0} \lfloor F(L) \rfloor ,
\end{align*}
so we have equality throughout here. In particular, $\lim_{L\to\infty} \lceil F(L) \rceil$
exists, and it then follows that $\lfloor F(L) \rfloor$ converges as well: this is immediately clear
if $F(L)\notin\Z$ for all large $L$, and if $F(L_n)\in\Z$ for some sequence $L_n\to\infty$,
then $F(L_n)\to\infty$, or we would obtain a contradiction to our inequalities (a direct proof of this fact
is also possible).

So it suffices to establish the inequalities, and we start with \eqref{2.1}. Given $L>0$,
define $\beta\in [0,\pi)$ by writing $\theta(L;t)=n\pi+\beta$, $n\in\Z$. Our intention here is to choose the
boundary condition that makes $t$ an eigenvalue of the problem on $[0,L]$, but actually there is an
exceptional case: if $H\equiv P_{e_2}$
on $(0,L)$, then $Hu=0$ there. This scenario, however, is completely
trivial because now $F(L)=0$, and we can ignore it. Lemma \ref{L2.1} then shows that
\[
\dim E_L^{(\beta)}[s,t] = 1 + n - \left\lceil \frac{1}{\pi} \left( \theta(L;s)-\beta\right) \right\rceil =
\lfloor F(L) \rfloor + 1 .
\]
Now we adapt the arguments presented in \cite[Chapter 14]{WMLN}. Suppose that \eqref{2.1} failed. Then
\begin{equation}
\label{2.4}
\dim \mathcal M \ge 2, \quad \mathcal M = R(E_L^{(\beta)}[s,t]) \ominus R(E(s,t)) ;
\end{equation}
of course, this definition of $\mathcal M$ does not make strict formal sense
if taken at face value since the projections
act in different Hilbert spaces. We really identify $R(E_L^{(\beta)})\subseteq L^2_H(0,L)$ with
a subspace of $L^2_H(0,\infty)$ in the obvious way, by extending elements of this space by the
zero function on $(L,\infty)$. In the same way, the self-adjoint relation $\mathcal S_L^{(\beta)}$
can be thought of as a relation on $L^2_H(0,\infty)$.

Since we are projecting onto a bounded interval, the elements of $R(E_L^{(\beta)}[s,t])$
are contained in $D(\mathcal S_L^{(\beta)})$, the domain of the self-adjoint relation. If we
take such elements $(f,g)\in\mathcal S_L^{(\beta)}$, then the standard
representatives $f(x)$ of $f\in L^2_H(0,L)$, determined as in \cite[Lemma 2.1]{Rembook},
will satisfy the boundary condition \eqref{bcbeta} at $x=L$. Now \eqref{2.4} implies
that there is a non-zero element $f\in\mathcal M$ with $f(L)=0$. This element, again extended by zero
beyond $L$ and viewed as an element of $L^2_H(0,\infty)$, will lie in $D(\mathcal S)$, the domain of
the self-adjoint relation on the half line $(0,\infty)$.

We can now evaluate $g-cf$, with $c=(s+t)/2$ and $g=S_L^{(\beta)}f$, the image of $f$ under
the \textit{operator }$S_L^{(\beta)}$, in two ways: if we work on $(0,L)$, then, since
$f=E_L^{(\beta)}[s,t]f$, we obtain $\|g-cf\|\le (t-s)/2 \|f\|$.
On the other hand, we can also view $(f,g)\in\mathcal S$ as an element of the self-adjoint
relation $\mathcal S$ on the half line, after extending both functions by zero for $x>L$, as usual.
Then $g=Sf+h$ with $h\in\mathcal S(0)$, the multi-valued part of $\mathcal S$; we cannot be sure here
if $g$ is still the operator image of $f$ (though this will follow when $L$ is regular). However,
we do know that $f,Sf\in\ov{D(\mathcal S)}=\mathcal S(0)^{\perp}$, so
\[
\|g-cf\|^2 \ge \|(S-c)f\|^2 \ge \left( \frac{t-s}{2} \right)^2 \|f\|^2 ;
\]
to obtain the second estimate, we have used that $E(s,t)f=0$.

So we in fact have equality here, but then it follows, by functional calculus again, that
$f=E(\{s,t\})f$ must be linear combination of the eigenfunctions for the eigenvalues $s,t$,
so let's write $f=u_s+u_t$, and here $u_{\lambda}$ solves $Ju'_{\lambda}=-\lambda Hu_{\lambda}$.
The corresponding representative $f(x)=u_s(x)+u_t(x)$, built from these solutions, is absolutely
continuous, satisfies $Jf'=-Hg$, with $g=su_s+tu_t\in L^2_H$, and represents the zero element
of $L^2_H$ on $(L,\infty)$. Now \cite[Lemma 2.26]{Rembook}, applied to this interval, shows
that $f(c)=0$ at all regular points $c>L$. Since $(L,\infty)$ is not contained in a singular
half line, by our assumption, there are such regular points $c>L$. Fix one, and observe that then
$u_s(c)=-u_t(c)$ satisfy the same boundary condition at $x=c$. So $u_s,u_t$ are orthogonal on
$(c,\infty)$, being eigenfunctions belonging to different eigenvalues. Since $\|f\|_{L^2_H(c,\infty)}=0$,
this implies that $u_s, u_t$ also have zero norm on $(c,\infty)$, but for a non-zero solution this is only possible
if $(c,\infty)$ were contained in a singular half line.
This contradiction establishes \eqref{2.1}.

The proof of \eqref{2.2} is, fortunately, less involved technically. We can assume that
$\liminf \lceil F(L)\rceil <\infty$. Pick a sequence $L_n\to\infty$
with $\lceil F(L_n)\rceil = \liminf \lceil F(L)\rceil$. Define $\beta_n\in [0,\pi)$ by writing
$\theta(L_n;s)=N_n\pi + \beta_n$, that is, we choose the boundary condition that makes $s$ an eigenvalue
of the problem on $[0,L_n]$. The exceptional situation that was already briefly mentioned above will
not occur here for large $n$ because then $H(x)$ will not be identically equal to $P_{e_2}$ on $(0,L_n)$.

The boundary condition $\beta_n$ can be implemented by a singular half line
$(L_n,\infty)$ of type $\beta_n+\pi/2$. These modified canonical systems
\[
H_n(x) = \begin{cases} H(x) & x<L_n \\ P_{\beta_n+\pi/2} & x>L_n \end{cases}
\]
converge to $H$ as $n\to\infty$ with respect to the metric discussed in \cite[Section 5.2]{Rembook}.
Moreover, in general, convergence in this metric is equivalent to the locally uniform (on $\C^+$)
convergence of the associated $m$ functions \cite[Theorem 5.7(b), Corollary 5.8]{Rembook}, and this
in turn implies that the spectral measures $\rho_n$ converge to $\rho$ in weak $*$ sense.
Thus it now suffices to show that
\[
\dim E_n(s,t)\le \lceil F(L_n) \rceil - 1 .
\]
This, with equality, is an immediate consequence of Lemma \ref{L2.1}; recall here that $\dim E_n(\{ s\} )=1$
by the choice of $\beta_n$.
\end{proof}

As usual, these results also tell us where the essential spectrum starts, because this is the point
where spectral projections become infinite dimensional. We don't state general results of this type
here, but we will see these methods in action in Section 4.
\section{Semibounded canonical systems}
In this section, we prove Theorems \ref{TWW1}, \ref{TWW2}, and \ref{T1.2},
in this order.
\begin{proof}[Proof of Theorem \ref{TWW1}]
We want to give an oscillation theoretic treatment, so we start out by observing that the
condition that $H\in\mathcal C_+$ is of course equivalent to
\begin{equation}
\label{3.1}
E(-t,0)=0 \quad \textrm{for all }t>0 .
\end{equation}
Let $\theta(x;t)$ again be the solution of
\eqref{ot} with initial value $\theta(0;t)=0$. Since $\theta(x;0)=0$, Theorem \ref{T2.1} shows that
\eqref{3.1} is equivalent to
\begin{equation}
\label{3.2}
\theta(x;-t)>-\pi \quad \textrm{for all }x,t>0 .
\end{equation}
This also holds when $(0,\infty)$ ends with
a singular half line $(L,\infty)$ of type $\beta+\pi/2$, say, with $0\le\beta<\pi$ (so we effectively
have the problem on $(0,L)$, with boundary condition $\beta$ at $x=L$). In this case,
we refer to Lemma \ref{L2.1} directly. This produces the stronger looking bound $\theta(x;-t)>-\pi+\beta$,
but actually this is implied by \eqref{3.2} in the current situation, for the following reason: if we had
$\theta(a;-t)\in (-\pi,-\pi+\beta]$ for some $a\ge L$,
then also $\theta(a;-t')\in (-\pi,-\pi+\beta)$ for suitable $t'>t$, but then
$\lim_{x\to\infty}\theta(x;-t')=-2\pi+\beta<-\pi$.

Suppose now that $H\in\mathcal C_+$, or, equivalently, that
\eqref{3.2} holds. We first claim that then $\det H(x)=0$ for almost every $x>0$.
This is obvious from \eqref{ot} since for any $\theta$, we have $e^*_{\theta}H(x)e_{\theta}\ge \det H(x)$,
so clearly \eqref{3.2} will fail for large $t$ and $x$ if $\det H(x)>0$ on a set of positive measure.

We can thus write $H(x)=P_{\varphi(x)}$, for some function $\varphi(x)$, and we now claim that we can take
\begin{equation}
\label{3.6}
\varphi(x) =\varphi_0(x), \quad \varphi_0(x) = \lim_{t\to\infty} \theta(x;-t) + \frac{\pi}{2} ,
\end{equation}
here. The limit defining $\varphi_0$ exists since $\theta(x;-t)>-\pi$ is a decreasing function of $t>0$, and the
monotonicity of $\theta$ in $x$ and \eqref{3.2} will then show that $\varphi_0(x)$ has the stated properties,
so it suffices to prove \eqref{3.6}.

For $H(x)=P_{\varphi(x)}$, we can write \eqref{ot} in the form
\begin{equation}
\label{3.5}
\theta' = -t \sin^2(\theta-\psi(x)) , \quad \psi(x)= \varphi(x)-\frac{\pi}{2} .
\end{equation}
Integration of this gives
\[
\int_0^L \sin^2\left( \theta(x;-t)-\varphi(x)+\frac{\pi}{2}\right) \, dx = -\frac{\theta(L;-t)}{t} < \frac{\pi}{t} .
\]
Since this holds for all $L>0$,
Fatou's lemma now shows that
\[
\liminf_{t\to\infty} \sin^2\left( \theta(x;-t)-\varphi(x)+\frac{\pi}{2} \right) = 0
\]
for almost all $x>0$, or, equivalently, $\varphi_0(x)\equiv \varphi(x)\bmod \pi$ almost everywhere.
Since $P_{\alpha+n\pi}=P_{\alpha}$, this establishes \eqref{3.6}.

Conversely, suppose now that $H(x)=P_{\varphi(x)}$, with $\varphi(x)$ as described in the Theorem.
We must show that then \eqref{3.2} holds.

The idea behind our argument is simple: both functions $\theta(x), \psi(x)=\varphi(x)-\pi/2$
are decreasing, and initially $\theta(0)\ge\psi(0+)$. Now the form of \eqref{3.5} will guarantee that
$\theta$ can never overtake $\psi$, and $\psi$ stops at the value $-\pi$ at the latest.

The essence of the method is best seen by first considering the simpler case when $\psi(0+)<0=\theta(0)$. If
\[
y:=\sup \{b>0: \theta(x)>\psi(x-) \:\: \textrm{\rm on }0<x<b \}
\]
were finite, then
$\theta(y)=\psi(y-)$. On a suitable interval $x\in (a,y)$, we have the estimate
\[
\sin^2(\theta-\psi(x))\le (\theta-\psi(y-))^2 ,
\]
as long as $\theta(a)\ge \theta\ge \psi(x)$. However, the solution $\theta_1$ of
\[
\theta'_1=-t(\theta_1-\psi(y-))^2 , \quad
\theta_1(a)=\theta(a)>\psi(y-) ,
\]
will not reach $\psi(y-)$ in finite time, so we obtain a contradiction to
the comparison principle. Thus $y=\infty$, and this says that $\theta(x)>\psi(x-)$ for all $x>0$, and
then \eqref{3.2} is an immediate consequence.

These arguments could also handle the case when $\psi(0+)=0$, but it is technically more convenient
to then approximate $H(x)=P_{\varphi(x)}$ by the coefficient functions
\[
H_n(x) = \begin{cases} H(x) & x> 1/n \\ P_{\varphi(1/n+)} & x<1/n \end{cases} .
\]
These will converge to $H$ with respect to the metric mentioned above and, what is more important right now, this
will give us the weak $*$ convergence of the spectral measures.

So if $\psi(x)<0$ for all $x>0$, then it will follow that $H\in\mathcal C_+$, by the case already covered.
This only leaves the case of an initial singular interval of type $\pi/2$, but this can be removed without
changing the spectral measure, and thus we are done in this case also.
\end{proof}
A more general version of Theorem \ref{TWW1}, also due to Winkler-Woracek \cite{WW}, can be established
by the same arguments, with only very minor adjustments, which we leave to the reader.
\begin{Theorem}
\label{T3.1}
The negative spectrum
$\sigma(H)\cap (-\infty,0)$ consists of at most $N$ points if and only if $H(x)=P_{\varphi(x)}$
for some decreasing function $\varphi(x)$ with
\[
\frac{\pi}{2} \ge\varphi(0+)\ge\varphi(\infty)\ge -N\pi -\frac{\pi}{2} .
\]
\end{Theorem}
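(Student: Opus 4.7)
The plan is to imitate the proof of Theorem \ref{TWW1} almost verbatim, with the threshold $-\pi$ replaced by $-(N+1)\pi$ throughout. First I would translate the spectral hypothesis into oscillation-theoretic language. With $\theta(0;t)=0$, Theorem \ref{T2.1} gives $\dim E(-t,0) = \lim_{L\to\infty}\lfloor -\theta(L;-t)/\pi\rfloor$, so the condition that $\sigma(H)\cap(-\infty,0)$ contain at most $N$ points is equivalent to
\[
\theta(x;-t) > -(N+1)\pi \quad\text{for all } x,t>0.
\]
The case when $(0,\infty)$ ends with a singular half line of type $\beta+\pi/2$ is handled by appealing to Lemma \ref{L2.1} directly, exactly as in the discussion following \eqref{3.2}; the stronger bound obtained there is again absorbed by the general bound by the same argument used for Theorem \ref{TWW1}.

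For the forward direction, I would repeat the same steps. The inequality $e_\theta^*H(x)e_\theta \ge \det H(x)$ combined with the displayed bound on $\theta$ forces $\det H(x)=0$ a.e., hence $H=P_\varphi$ for some function $\varphi$. I would then define
\[
\varphi_0(x) = \lim_{t\to\infty}\theta(x;-t) + \frac{\pi}{2};
\]
the limit exists because $t\mapsto\theta(x;-t)$ is decreasing in $t$ and now bounded below by $-(N+1)\pi$. Monotonicity of $\theta$ in $x$ for $t<0$ shows $\varphi_0$ is decreasing, and the new lower bound on $\theta$ yields exactly $\pi/2\ge\varphi_0(0+)\ge\varphi_0(\infty)\ge -N\pi - \pi/2$. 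Integrating the Prüfer equation in the form \eqref{3.5} on $(0,L)$ now produces
\[
\int_0^L \sin^2\!\bigl(\theta(x;-t)-\varphi(x)+\pi/2\bigr)\,dx = -\theta(L;-t)/t \le (N+1)\pi/t,
\]
so Fatou's lemma delivers $\varphi_0\equiv\varphi\bmod\pi$ a.e., and we may take $\varphi=\varphi_0$.

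For the converse, the comparison argument transfers with no essential change. Setting $\psi=\varphi-\pi/2$, both $\theta(\cdot;-t)$ and $\psi$ are decreasing, $\theta(0;-t)=0\ge\psi(0+)$, and the assumption gives $\psi(\infty)\ge -(N+1)\pi$. The key estimate $\sin^2(\theta-\psi)\le(\theta-\psi)^2$ and the auxiliary ODE $\theta_1'=-t(\theta_1-\psi(y-))^2$ used to prevent $\theta$ from overtaking $\psi$ do not depend on the specific value at which $\psi$ is pinned, so the same argument shows $\theta(x;-t)>\psi(x-)\ge -(N+1)\pi$ for all $x>0$. The nuisance case $\psi(0+)=0$ is dealt with by the same truncation $H_n(x)=P_{\varphi(1/n+)}$ on $(0,1/n)$, together with weak-$*$ convergence of spectral measures; an initial singular interval of type $\pi/2$ can be removed without changing the spectral measure, as before.

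The one place to pay attention is the bookkeeping of integer shifts: one has to check that the bound $\theta>-(N+1)\pi$ really does correspond to ``at most $N$ eigenvalues'' and not an off-by-one version of that, and that the exceptional situation in which $(0,\infty)$ ends with a singular half line fits into the same count via Lemma \ref{L2.1}. Beyond this accounting, there is nothing genuinely new; every other step of the proof of Theorem \ref{TWW1} goes through unchanged.
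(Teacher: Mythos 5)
The paper does not write out a proof of Theorem \ref{T3.1}; it states only that the result ``can be established by the same arguments, with only very minor adjustments, which we leave to the reader.'' Your proposal is exactly that filled-in version: replacing the threshold $-\pi$ by $-(N+1)\pi$ throughout, and the $\varphi$-bookkeeping and the translation via Theorem \ref{T2.1} ($\dim E(-t,0)\le N$ for all $t>0$ iff $\theta(x;-t)>-(N+1)\pi$) are correct, so the proposal is correct and takes the same approach the paper intends.
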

This, in turn, gives the following characterization of the larger class of coefficient functions of this type,
but with a possibly unbounded $\varphi(x)$.
\begin{Corollary}
\label{C3.1}
(a) $H(x)=P_{\varphi(x)}$ for some decreasing function $\varphi(x)$ with $\varphi(0+)<\infty$
if and only if the problems
on $[0,L]$ have finite negative spectrum for all $L>0$.

(b) If $\sigma(H)\subseteq [c,\infty)$ for some $c\in\R$, then $H(x)=P_{\varphi(x)}$ for some decreasing
function $\varphi(x)$ with $\varphi(0+)<\infty$.
\end{Corollary}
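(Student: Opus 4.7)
My plan is to deduce both parts by oscillation-theoretic comparison with Theorem \ref{T3.1}, using the Pr{\"u}fer angle $\theta(x;t)$ of \eqref{ot} with $\theta(0;t)=0$. I will also repeatedly use that on $[0,L]$ finite negative spectrum is independent of the boundary condition chosen at $L$, since the different self-adjoint extensions $S_L^{(\beta)}$ have interlacing spectra.

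For the forward direction of (a), fix $L>0$ and $H=P_\varphi$ with $\varphi$ decreasing and $\varphi(0+)<\infty$. Since $P_{\alpha+\pi}=P_\alpha$, I may replace $\varphi$ by $\varphi-k\pi$ for an appropriate $k\in\Z$ and assume $\varphi(0+)\le\pi/2$ while preserving monotonicity. Since $\varphi(L)$ is finite, pick $N\in\N$ with $\varphi(L)\ge -N\pi-\pi/2$, and extend to a half-line coefficient $\widetilde H$ by $\widetilde H(x)=H(x)$ for $x\le L$ and $\widetilde H(x)=P_{\varphi(L)}$ for $x>L$. The resulting angle $\widetilde\varphi$ is decreasing and satisfies $\pi/2\ge\widetilde\varphi(0+)\ge\widetilde\varphi(\infty)\ge -N\pi-\pi/2$, so Theorem \ref{T3.1} gives at most $N$ negative eigenvalues for $\widetilde H$. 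The terminating singular half-line makes this equivalent to the $[0,L]$ problem with a specific boundary condition at $L$; so that problem has finite negative spectrum, and by interlacing so does every other boundary condition.

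For the converse in (a), I adapt the Fatou argument used to prove Theorem \ref{TWW1}. By Lemma \ref{L2.1}, finite negative spectrum on $[0,L]$ is equivalent to $\theta(L;-t)$ being bounded below as $t\to\infty$. Assuming this for every $L$, the limit
\[
\varphi_0(x):=\lim_{t\to\infty}\theta(x;-t)+\frac{\pi}{2}
\]
is finite for every $x>0$, lies in $(-\infty,\pi/2]$, and is decreasing in $x$ by Proposition \ref{P2.1}. To conclude $H=P_{\varphi_0}$ almost everywhere, I first check $\det H=0$ a.e.\ via $e_\theta^*He_\theta\ge\det H$: if $\det H>0$ on a positive-measure subset of some $[0,L]$, then $\theta(L;-t)\le -t\int_0^L\det H\,dx\to-\infty$, contradicting the hypothesis. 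The integration of \eqref{3.5} plus Fatou argument that established \eqref{3.6} then gives $\varphi_0\equiv\varphi\pmod\pi$ almost everywhere, so I may take $\varphi:=\varphi_0$, which has $\varphi(0+)\le\pi/2<\infty$.

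For part (b) I reduce to (a). If $\sigma(H)\subseteq[c,\infty)$ then $\dim E(s,c)=0$ for all $s<c$. Inequality \eqref{2.1} in the proof of Theorem \ref{T2.1} with $t=c$ (or Lemma \ref{L2.1} directly when $(0,\infty)$ ends in a singular half-line) then yields $\theta(L;c)-\theta(L;s)<\pi$ for every $L>0$ and $s<c$, hence $\theta(L;s)>\theta(L;c)-\pi$ uniformly in $s$. The limit as $s\to -\infty$ is therefore finite, which means the $[0,L]$ problem has finite negative spectrum for every $L$, and part (a) delivers the representation. The one subtle step in the whole argument is the $k\pi$ shift in (a) $\Rightarrow$, needed so that the extension $\widetilde H$ actually fits the range conditions of Theorem \ref{T3.1}; the remaining work is a replay of the Fatou argument from the proof of Theorem \ref{TWW1} combined with the quantitative oscillation bound \eqref{2.1}.
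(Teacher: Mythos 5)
Your proof is correct and fills in, step by step, exactly the two hints the paper gives in its brief sketch: the forward direction of (a) via extension by a singular half line together with Theorem \ref{T3.1}, and part (b) by transferring semiboundedness from the half line to the problems on $(0,L)$ (you make this concrete via inequality \eqref{2.1}, which is a clean way to pin down the paper's ``easy fact''). The one place where you diverge slightly from what the paper seems to intend is the converse direction of (a): rather than running Theorem \ref{T3.1} backwards through the singular-half-line trick and then checking consistency of the resulting angles across different $L$, you construct $\varphi_0(x)=\lim_{t\to\infty}\theta(x;-t)+\pi/2$ directly and rerun the Fatou argument from the proof of Theorem \ref{TWW1}; this is arguably cleaner since it yields a single global $\varphi_0$ at once. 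A small remark on the forward direction: the appeal to interlacing of the spectra of $S_L^{(\beta)}$ is valid but more than is needed, since Lemma \ref{L2.1} already shows that boundedness below of $\theta(L;-t)$ as $t\to\infty$ --- and hence finiteness of the negative spectrum --- is independent of $\beta$.
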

To prove part (a), just recall that boundary conditions at $x=L$ can be implemented by a singular half
line $(L,\infty)$. This will then imply part (b), after establishing the easy fact that problems on $(0,L)$
will be semibounded if the half line problem has this property.

The converse of part (b) is false, and counterexamples are provided by Schr{\"o}dinger operators that are
unbounded below, when these are written as canonical systems.

\begin{proof}[Proof of Theorem \ref{TWW2}]
It will be convenient to also express the values of $m(-t)$, $t>0$, in terms of an angle, so write
$m(-t)=\cot\alpha(-t)$, with $-\pi<\alpha(-t)<0$. Here, we again leave the trivial case case
$H(x)\equiv P_{e_2}$ to the reader. We also write $\psi(x)=\varphi(x)-\pi/2$, as above.
We then want to show that $\psi(0+)=\alpha(-\infty)$, $\psi(\infty)=\alpha(0-)$.

The key tool will be the following fact.
\begin{Lemma}
\label{L2.2}
Let $H\in\mathcal C_+$, and let $\theta(x;-t)$, $t>0$, be the solution of \eqref{ot} with $\theta(0;-t)=\alpha(-t)$.
Then $\theta(x;-t)\ge -\pi$ for all $x\ge 0$.
\end{Lemma}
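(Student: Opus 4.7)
The plan is to argue by contradiction via a shift of the base point. Assume $\theta(x_0; -t) < -\pi$ for some $x_0 > 0$. Since $\theta(\cdot; -t)$ is continuous with $\theta(0; -t) = \alpha(-t) \in (-\pi, 0)$, there is a smallest $x_1 \in (0, x_0)$ at which $\theta(x_1; -t) = -\pi$. Writing the $L^2_H$ solution as $f = R e_{\theta}$ with $R > 0$, this gives $f(x_1) = -R(x_1)(1, 0)^t$; in particular $f_2(x_1) = 0$ while $f_1(x_1) \neq 0$.

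Next I pass to the shifted half-line system with coefficient $\widetilde H(y) := H(x_1 + y)$. By Theorem \ref{TWW1}, since $\widetilde\varphi(y) := \varphi(x_1 + y)$ is still decreasing and valued in $[-\pi/2, \pi/2]$, one has $\widetilde H \in \mathcal C_+$. The function $\widetilde f(y) := f(x_1 + y, -t)$ solves the shifted equation, lies in $L^2_{\widetilde H}(0, \infty)$, and satisfies the left boundary condition $\widetilde f_2(0) = 0$. Hence the $m$ function $\widetilde m$ of the shifted problem satisfies $\widetilde m(-t) = \widetilde f_1(0)/\widetilde f_2(0) = \infty$.

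The contradiction comes from the structure of generalized Herglotz functions: since $\widetilde H \in \mathcal C_+$, the spectral measure of $\widetilde m$ is supported in $[0, \infty)$, so unless $\widetilde m \equiv \infty$, the Herglotz representation shows that $\widetilde m$ extends holomorphically to $\C \setminus [0, \infty)$ and takes real, finite values on $(-\infty, 0)$, contradicting $\widetilde m(-t) = \infty$. The only remaining possibility $\widetilde m \equiv \infty$ forces $\widetilde H \equiv P_{\pi/2}$; then the Pr\"ufer equation on $[x_1, \infty)$ reads $\theta' = -t \sin^2\theta$, so $\theta(x; -t) \equiv -\pi$ on this interval, again contradicting $\theta(x_0; -t) < -\pi$.

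The main technical obstacle is confirming that the shifted system inherits $\mathcal C_+$ and cleanly disposing of the degenerate case $\widetilde m \equiv \infty$. Theorem \ref{TWW1} makes the inheritance immediate by recasting $\mathcal C_+$ as monotonicity of $\varphi$, and the degenerate case is ruled out precisely because the standing hypothesis $\theta(x_0; -t) < -\pi$ demands a strict decrease of $\theta$ past $x_1$, which cannot occur when $\widetilde H \equiv P_{\pi/2}$. Notice that the conclusion is the non-strict inequality $\geq -\pi$, consistent with this case analysis: equality $\theta = -\pi$ can genuinely occur on a terminal singular plateau of type $\pm\pi/2$.
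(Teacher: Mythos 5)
Your proof is correct, but it takes a genuinely different route from the paper. The paper modifies the $L^2_H$ solution $f$ to $f_L$ (equal to $e_1$ on $(0,L)$ and to $f$ on $(L,\infty)$), notes that $(f_L,g_L)\in\mathcal S$, and computes $\s{f_L}{Sf_L}=-t\int_L^\infty f^*Hf$; since $S\ge 0$ forces this to be nonnegative while the integrand is nonnegative, both vanish, so $Hf=0$ and $\theta\equiv-\pi$ on $[L,\infty)$. This is a self-contained, operator-theoretic argument that does not invoke Theorem \ref{TWW1} at all (which is good practice, since the lemma appears mid-proof of Theorem \ref{TWW2}). Your argument instead shifts the base point to $x_1$, appeals to Theorem \ref{TWW1} to see that the shifted system stays in $\mathcal C_+$, and then derives a contradiction from $\widetilde m(-t)=\infty$ being incompatible with the holomorphic continuation of $\widetilde m$ to $\C\setminus[0,\infty)$. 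This is legitimate: Theorem \ref{TWW1} is proved before Theorem \ref{TWW2} and does not use Lemma \ref{L2.2}, so there is no circularity. Two small observations. First, your handling of the degenerate case is correct, but it may be cleaner to note directly that $\widetilde f_2(0)=0$ together with $\widetilde f\in L^2_{\widetilde H}$ would make $-t$ an eigenvalue of $\widetilde S$ unless $\widetilde f$ is the zero element of $L^2_{\widetilde H}$, i.e.\ $H\widetilde f=0$ a.e., which forces $\widetilde H\equiv P_{\pi/2}$; this avoids separately invoking the structure of generalized Herglotz functions and reaches the same dichotomy. Second, the paper's version proves the slightly stronger statement that once $\theta$ reaches $-\pi$ it stays there, which is later quoted as ``Lemma \ref{L2.2} and its proof''; your proof yields this too, but only after combining the lemma with the monotonicity of $\theta$ in $x$ for $z=-t<0$. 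The trade-off, then, is that the paper's proof is more elementary and carries a bit of extra information, while yours is shorter once Theorem \ref{TWW1} is in hand and makes the role of $\mathcal C_+$-stability under shifts explicit.
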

\begin{proof}
The initial value of the solution $f=Re_{\theta}$ of \eqref{can} with Pr{\"u}fer angle $\theta$
is a multiple of $(m(-t),1)^t$, so
$f\in L^2_H(0,\infty)$. Suppose now that $\theta(L;-t)=-\pi$ for some $L>0$. This says that
$f(L)=e_1$, after multiplying by a suitable (negative) constant. The modified version of this solution
\[
f_L(x) = \begin{cases} e_1 & x<L \\ f(x) & x>L \end{cases}
\]
lies in $D(\mathcal S)$, the domain of the self-adjoint relation on $(0,\infty)$. More specifically,
$(f_L, g_L)\in\mathcal S$, with
\[
g_L(x) = \begin{cases} 0 & x<L \\ -tf(x) & x>L \end{cases} .
\]
If we denote the self-adjoint \textit{operator }by $S$, then
\[
\s{f_L}{g_L} = \s{f_L}{Sf_L} .
\]
Note that this will hold even though $g_L$ need not equal $Sf_L$ since even in that case
$g_L$ differs from this operator image
by at most an element of the multi-valued part $\mathcal S(0)$,
and $f_L\in D(\mathcal S) \subseteq \mathcal S(0)^{\perp}$.

Now $\s{f_L}{Sf_L}\ge 0$ by functional calculus, but on the other hand,
\[
\s{f_L}{g_L} = -t \int_L^{\infty} f^*(x)H(x)f(x)\, dx \le 0 .
\]
So this last integral equals zero, but this means that $Hf=0$ almost everywhere on $(L,\infty)$, and thus
$f(x)=e_1$ and $\theta(x;-t)=-\pi$
on $x\ge L$.
\end{proof}

Let's now return to the proof of Theorem \ref{TWW2}.
We first show that $\alpha(-\infty)\ge \psi(0+)$.
If this were false, then the Pr{\"u}fer angle $\theta(x;-t)$ with
the initial value $\theta(0;-t)=\alpha(-t)$ from Lemma \ref{L2.2}
would satisfy $\theta(x;-t)\le\psi(x)-\delta$
on some interval $x\in (0,a)$ for all large $t>0$. But now \eqref{3.5} shows that then $\theta'\le -t\sin^2\delta$
there, as long as $\theta-\psi\ge -\pi+\delta$. It follows that $\theta(x;-t)$ will decrease
beyond $-\pi$ for large $t$, contrary to what we established in Lemma \ref{L2.2}. Recall also in this
context that we already dismissed the case $\psi\equiv 0$, so we will have $\psi(x)<0$ for all large $x$.

On the other hand, $\alpha(-\infty)>\psi(0+)$ is also impossible, and the argument is similar. We could then
pick first a sufficiently large $t_1>0$ and then $a>0$ such that $\theta(a;-t_1)> \psi(0+)$ also.
Here, $\theta$ again refers to the Pr{\"u}fer angle from
Lemma \ref{L2.2}, with initial value $\theta(0;-t)=\alpha(-t)$. Again, \eqref{3.5} shows that
$|\theta'(x;-t_2)|$ can be made arbitrarily large on $0\le x\le a$ by sending $t_2\to\infty$,
at least as long as $\theta(x;-t_2)$ stays at some distance from $\psi(0+)$.
This means that $\theta(a;-t_2)$ will have overtaken $\theta(a;-t_1)$ for all large $t_2\gg t_1$,
but this contradicts the monotonicity of $m(-t)$ on $t>0$. More explicitly,
$\cot\theta(a;-t)=m_a(-t)$ is the $m$ function
of the problem on $(a,\infty)$, and $H(x+a)\in\mathcal C_+$ also, by Lemma \ref{L2.2}
and its proof. Thus it is not
possible that $\theta(a;-t_2)<\theta(a;-t_1)$ for $t_2>t_1$.

Next, we show that $\psi(\infty)\le \alpha(0-)$. We again consider the Pr{\"u}fer angles
with the initial values from Lemma \ref{L2.2}. By \eqref{3.5},
$\theta(x;-t)$ can only approach a value that is $\equiv\psi(\infty)\bmod \pi$ when $x\to\infty$.
Now if we had $\psi(\infty)>\alpha(0-)$, then also $\psi(\infty)>\theta(0;-t)$ for sufficiently small $t>0$,
so the first value at which we can stabilize is $\psi(\infty)-\pi$.
However, by Lemma \ref{L2.2}, we also must not cross the value $-\pi$, and since
$\psi(\infty)\in [-\pi,0]$, this forces $\psi(\infty)=0$, but this puts us back in the trivial case
$\psi(x)\equiv 0$ that we already dispensed with.

Finally, we must rule out the situation where $\psi(\infty)<\alpha(0-)$. In this case,
we can rotate all angles by $\gamma=-\pi-\alpha(0-)$; in other words, we move
$\alpha(0-)$ to its new destination $-\pi$.

This can be implemented by letting the rotation matrix
\[
R_{\gamma} = \begin{pmatrix} \cos\gamma & -\sin\gamma \\ \sin\gamma & \cos\gamma
\end{pmatrix}
\]
act on $m$ as a linear fractional transformation $m_{\gamma}=R_{\gamma}m$, and this is the same as conjugating
the coefficient function $H_{\gamma}(x)=R_{\gamma}H(x)R_{-\gamma}$ \cite[Theorem 3.20]{Rembook}.
By inspecting
\[
m_{\gamma}(z) = \frac{m(z) \cos\gamma - \sin\gamma}{m(z)\sin\gamma + \cos\gamma} ,
\]
we see that our choice of $\gamma$ makes sure that $m_{\gamma}$ is still holomorphic
on a neighborhood of $(-\infty,0)$,
so $H_{\gamma}\in \mathcal C_+$ as well. By its construction, the angle functions $\alpha_{\gamma},\psi_{\gamma}$
of the new system are simply the rotated versions $\alpha+\gamma$, $\psi +\gamma$ of the old ones.
However, now we obtain a contradiction to Lemma \ref{L2.2} because $\psi(\infty)+\gamma <-\pi$ has
been moved past $-\pi$, but $\alpha(-t)+\gamma>-\pi$ for $t>0$, so the Pr{\"u}fer angle $\theta_{\gamma}(x;-t)$
would have to cross the forbidden value $-\pi$ before it can stabilize.
\end{proof}

\begin{proof}[Proof of Theorem \ref{T1.2}]
Assume that $\sigma (H)\subseteq [0,\infty)$. In general, the essential spectrum of the whole line problem
is the union of the essential spectra of the half line problems; this is often referred to as the
decomposition method. So, in our situation, the two half line $m$ functions $m_{\pm}$ will both be meromorphic
on a neighborhood of $(-\infty, 0)$. In this situation, the negative eigenvalues of the whole line problem
would occur exactly at the $-t<0$ at which $m_+(-t)=-m_-(-t)$ or $m_+(-t)=m_-(-t)=\infty$; indeed, this is the
condition for the square integrable solutions on the half lines to arrive at $x=0$ with matching values.
Moreover, $m_{\pm}$ are still increasing on every subinterval of $(-\infty,0)$ that avoids the poles.
By looking at the possible scenarios, we can now deduce quickly that $m_{\pm}$ together can have at
most one pole on $(-\infty,0)$. In particular, Theorem \ref{T3.1} applies to both half lines,
so $H(x)=P_{\varphi(x)}$ for some function $\varphi(x)$ which is decreasing on both half lines
and then also decreasing overall if we add a suitable multiple of $\pi$ to it on one of the half lines.

Suppose now that we had $\varphi(-\infty)-\varphi(\infty)>\pi$, and here we may also assume
that $\varphi$ does not have jumps of size $\ge\pi$ because these could be replaced by jumps of smaller sizes
by removing these unnecessary multiples of $\pi$. As our first step, we then rotate, as in the last part of
the proof of Theorem \ref{TWW2}, in such a way that the new $\varphi$ ranges over an interval $(\alpha,\beta)
\supseteq [-\pi/2,\pi/2]$. This will not affect the property of $H$ of having non-negative spectrum because
acting on $H(x)$ by a rotation matrix will lead to a unitarily equivalent
(whole line) operator \cite[Theorem 7.2]{Rembook}. Since all jumps of $\varphi$ (if any) are of size $<\pi$,
we can then find an $a\in\R$ such that $\varphi(a-)<\pi/2$, $\varphi(a+)>-\pi/2$. Now Theorem \ref{TWW1}
(together with its mirror version for left half lines) shows that both half line problems, on $(-\infty, a)$
and $(a,\infty)$, have negative spectrum. However, as we just pointed out, this is impossible when the
whole line problem has non-negative spectrum.

The converse can be established by similar arguments.
If $\psi(-\infty)-\psi(\infty)\le\pi$, with $\psi=\varphi-\pi/2$,
then we can cut the whole line into two half lines in such a way that both half line coefficient functions
are as described in Theorem \ref{TWW1}. What we need to do here is cut at the unique point at which
$\psi$ crosses a value $\equiv 0\bmod\pi$, if there is one; if not, then we can cut at an arbitrary point.
Then we refer to Theorem \ref{TWW2} and its
analog for left half lines (and let's just say that we cut at $x=0$):
\begin{align}
\label{3.9}
\psi(0+)& =\alpha_+(-\infty), \quad\psi(\infty) =\alpha_+(0-) ,\\
\nonumber
\psi(0-)& =\alpha_-(-\infty), \quad \psi(-\infty) =\alpha_-(0-) .
\end{align}
The angles $\alpha_{\pm}$ again express the values of the $m$ functions:
$\pm m_{\pm}(-t) = \cot\alpha_{\pm}(-t)$. Note that $\alpha_+$ is decreasing on $(-\infty,0)$,
while $\alpha_-$ is increasing there.
When these monotonicity properties are combined with \eqref{3.9} and the information on the range of $\psi$,
then it will follow that $\alpha_{\pm}$ never take
the same value modulo $\pi$. (As usual, there is a trivial exceptional case here, when $H(x)\equiv P_{\beta}$,
which, also as usual, we leave to the reader.) So the whole line problem does not have negative eigenvalues,
and then the decomposition method finishes the proof.
\end{proof}
\section{The essential spectrum}
We will prove the following more general result, which will imply Theorems \ref{Tess}, \ref{T1.3}.
\begin{Theorem}
\label{T4.1}
Suppose that $H\in\mathcal C_+$, write $H(x)=P_{\varphi(x)}$, with $\varphi$ chosen as in Theorem \ref{TWW1}, and let
\[
A = \limsup_{x\to\infty} x(\varphi(x)-\varphi(\infty))
\]
(so $0\le A\le\infty$). Then
\[
\frac{1}{4A} \le \min \sigma_{ess} \le \frac{1}{A} .
\]
\end{Theorem}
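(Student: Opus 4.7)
The plan is to apply the oscillation theory of Section 2 to reduce the location of $\min\sigma_{\mathrm{ess}}$ to an asymptotic question about the Pr\"ufer angle, and then to attack both bounds via Riccati-type comparison arguments. Since $\theta(x;0)\equiv 0$ and $\theta(L;t)$ is strictly increasing in $t$, Theorem \ref{T2.1} gives $\dim E(0,t)=\lim_{L\to\infty}\lfloor\theta(L;t)/\pi\rfloor$, so
\[
\min\sigma_{\mathrm{ess}} \;=\; \inf\bigl\{\,t>0 : \lim_{L\to\infty}\theta(L;t)=+\infty\,\bigr\} ,
\]
with the convention $\inf\emptyset=+\infty$. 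Shifting by the limiting angle, I set $\xi(x):=\theta(x;t)-\varphi(\infty)$ and $\delta(x):=\varphi(x)-\varphi(\infty)\ge 0$; then $\delta$ is decreasing to $0$ and the Pr\"ufer equation becomes $\xi' = t\cos^2(\xi-\delta)$. The monotone function $\xi$ either escapes to $+\infty$ or converges to some $\pi/2+k\pi$, so the theorem amounts to identifying the threshold $t$ above which $\xi$ crosses infinitely many of these barriers.

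For the lower bound $\min\sigma_{\mathrm{ess}}\ge 1/(4A)$, I fix $t<1/(4A)$ and pick $c>A$ with $4tc<1$, so that $\delta(x)\le c/x$ for $x\ge x_0$. Introducing $\mu(x):=\pi/2+k\pi+\delta(x)-\xi(x)$ near the next barrier, a direct computation gives $\mu' = \delta'-t\sin^2\mu$. The reference Riccati $f'=-c/y^2-tf^2$ admits the positive steady solution $f(y) = \alpha_-/y$ with $\alpha_- = (1-\sqrt{1-4tc})/(2t)$, real because $4tc<1$. An integrated differential-inequality argument, using $\sin^2\mu\le\mu^2$ and the identity $\int_{x_0}^x(\delta'+c/y^2)\,dy = (\delta(x)-c/x)-(\delta(x_0)-c/x_0)$ to control the drive, shows that $\mu\ge f>0$ persists beyond a suitably chosen $x_1\ge x_0$; hence $\xi$ cannot cross any further barrier, $\xi$ is bounded, and $t\le\min\sigma_{\mathrm{ess}}$.

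For the upper bound $\min\sigma_{\mathrm{ess}}\le 1/A$, I fix $t>1/A$ and choose $c<A$ with $tc>1$. The $\limsup$ hypothesis supplies $x_n\to\infty$ with $\delta(x_n)>c/x_n$, and by monotonicity $\delta(y)>c/x_n$ on $(0,x_n]$. I argue by contradiction: if $\xi$ stabilized at some $\pi/2+k\pi$, then $\tau:=\pi/2+k\pi+\delta-\xi\to 0^+$, and integrating $\tau'=\delta'-t\sin^2\tau$ from $x$ to $\infty$ (with $\tau(\infty)=\delta(\infty)=0$) yields the Volterra identity
\[
\tau(x) \;=\; \delta(x) + t\!\int_x^{\infty}\!\sin^2\tau(y)\,dy .
\]
From $\tau\ge\delta$ and the estimate $\int_{x_n}^{x_{n+1}}\tau^2\ge (c/x_{n+1})^2(x_{n+1}-x_n)$ along a sub-subsequence with spacing $x_{n+1}/x_n\to 1$, a careful summation gives $\int_x^{\infty}\tau^2 \ge (1-\varepsilon)c^2/x$; inserting this back into the Volterra identity (and using $\sin^2\tau\ge(1-\varepsilon')\tau^2$ near $0$) yields $\tau(x)\ge(1-\varepsilon'')tc^2/x$. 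Each further iteration of this bootstrap multiplies the leading constant by $(tc)^2$, and for $tc>1$ the iterated lower bound for $\tau(x)$ blows up at any fixed $x$, contradicting $\tau\to 0$.

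The main obstacle is the bookkeeping in the upper bound. The lower direction reduces to the standard Riccati observation that $t\alpha^2-\alpha+c=0$ has positive real roots precisely when $4tc\le 1$. The upper direction, however, demands the finer subsequence spacing (of the form $x_{n+1}=(1+\varepsilon)x_n$ rather than the naive $x_{n+1}=2x_n$) and the near-zero expansion $\sin^2\tau\ge(1-o(1))\tau^2$ to recover the sharp constant; a less careful iteration only yields a weaker bound like $\min\sigma_{\mathrm{ess}}\le 2/A$, so tracking the multiplicative losses through the iteration is the technically delicate step.
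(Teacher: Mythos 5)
Your reduction via Theorem \ref{T2.1} to the asymptotics of the Pr\"ufer angle and the change of variables $\xi,\delta$ are exactly what the paper does, and your identification of the fixed quadratic $t\alpha^2-\alpha+c=0$ as the governing object for the lower bound is the same mechanism as the paper's Euler equation $-u''-Cu/x^2=0$. The difficulties are in the execution of both directions.

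\emph{Lower bound.} Your choice of comparison variable $\mu=\pi/2+k\pi+\delta-\xi$ puts $\delta'$ into the Riccati equation, and $\delta'$ is not controlled by the hypothesis: $\varphi$ is merely a decreasing function, so $d\delta$ can have a singular part and arbitrarily large downward jumps (bounded only by $\delta$ itself). The inequality $(\mu-f)'\ge(\delta'+c/y^2)-t(\mu+f)(\mu-f)$ then has an unsigned drive, and the ``integrated'' version $\int_{x_0}^x(\delta'+c/y^2)$ being bounded does not close the argument, because a steep drop in $\delta$ can push $\mu$ below $f$ and the resulting configuration is not self-correcting. (It is also not clear that $\mu\ge f$ is even true; only $\nu=\mu-\delta>0$ is needed and true.) The clean move, which is in effect what the paper does, is to work with $\nu:=\pi/2+k\pi-\xi$ and bound $\sin^2(\nu+\delta)\le(\nu+c/x)^2$ using $\delta\le c/x$ directly, giving the $\delta'$-free comparison $\nu_1'=-t(\nu_1+c/x)^2$; writing $\hat\mu=\nu_1+c/x$ reproduces exactly your $f$-equation $\hat\mu'=-c/x^2-t\hat\mu^2$ and the comparison principle then applies without complications. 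So the lower bound is recoverable, but as written it has a real gap.

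\emph{Upper bound.} This is where the proposal genuinely fails, and it is a structural problem, not bookkeeping. The $\limsup$ hypothesis only produces an unbounded \emph{set} of $x$ with $x\delta(x)>c$, with no control on its density, and one cannot extract a sub-subsequence with spacing $x_{n+1}/x_n\to1$ from a sparse sequence (a sub-subsequence is sparser, not denser). With a sparse sequence --- e.g.\ $\delta\equiv c/x_n$ on $[x_{n-1},x_n)$ with $x_n=K^n$ and $K$ large --- the key input $\int_x^\infty\tau^2\ge(1-\varepsilon)c^2/x$ is false: from $\tau\ge\delta$ alone one only gets $\int_{x_{m-1}}^\infty\delta^2\approx c^2/(Kx_{m-1})$, a factor of $K$ short, and your Volterra bootstrap then needs $t\gtrsim K/A$ rather than $t>1/A$ to blow up. The paper's argument is built precisely to avoid needing a density of good points: it takes one large $b$ with $\psi(b)\ge B/b$, uses monotonicity to get $\psi\ge B/b$ on all of $[a,b]$, and solves the comparison Riccati $\theta_1'=(1-\epsilon)(\theta_1-B/b)^2$ explicitly to show $\theta$ reaches $0$ by $x=b$ when $b$ is large enough. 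You should replace the Volterra-iteration step with an argument of this finite-window type; as written the upper bound is not established.
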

Here we formally set $\min\emptyset =\infty$ and, as usual in such situations, $1/0=\infty$, $1/\infty=0$.

The presence of a gap between the upper and lower bounds
is unavoidable since $A$ does not provide enough information
to find the bottom of the essential spectrum exactly. This is possible, however,
if the limit exists; more generally, we have the following bound.
\begin{Theorem}
\label{T4.2}
Suppose that $H\in\mathcal C_+$, and let
\[
B = \liminf_{x\to\infty} x(\varphi(x)-\varphi(\infty)) .
\]
Then $\min\sigma_{ess}\le 1/(4B)$.
\end{Theorem}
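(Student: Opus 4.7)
The plan is to show that $\dim E(0,t)=\infty$ for every $t>1/(4B)$, which implies $\min\sigma_{ess}\le t$; letting $t\downarrow 1/(4B)$ then yields the theorem. Since $\theta(L;0)\equiv 0$, Theorem \ref{T2.1} applied with $s=0$ reduces this to showing $\theta(L;t)\to\infty$ as $L\to\infty$.

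Fix $t>1/(4B)$ and choose $B'\in(1/(4t),B)$. By the hypothesis on $B$, there is $X>0$ with $\rho(x):=\psi(x)-\psi_{\infty}\ge B'/x$ for all $x\ge X$, where $\psi=\varphi-\pi/2$ and $\psi_{\infty}=\varphi(\infty)-\pi/2$. For $H=P_{\varphi}$, equation \eqref{ot} at energy $t$ reads $\theta'=t\sin^2(\theta-\psi)$, so, setting $\eta=\theta-\psi$ and interpreting derivatives in the Stieltjes sense when $\psi$ has jumps,
\[
d\eta = t\sin^2\eta\,dx + (-d\psi),
\]
both summands non-negative. Hence $\eta$ is non-decreasing, and $\eta\to\infty$ iff $\theta\to\infty$ since $\psi$ is bounded. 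Suppose for contradiction $\eta$ has a finite limit $\eta_{\infty}$; the equation forces $\sin\eta_{\infty}=0$, so $\eta_{\infty}=n\pi$ with $n\ge 1$ (the case $n=0$ gives $\eta\equiv 0$, hence $\psi\equiv 0$ and $B=0$, contrary to our assumption). Set $\delta=n\pi-\eta>0$, decreasing monotonically to $0$.

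The main step is a Riccati comparison. The equation for $\delta$ reads $d\delta=-t\sin^2\delta\,dx-dp$ with $dp:=-d\psi\ge 0$. Choose $\varepsilon\in(0,1)$ so that $s:=t(1-\varepsilon)$ still satisfies $sB'>1/4$, and take $x_0\ge X$ large enough that $\delta(x)\le\delta_0$ for $x\ge x_0$, where $\delta_0$ is fixed so that $\sin^2\delta\ge(1-\varepsilon)\delta^2$ on $(0,\delta_0)$. Then on $[x_0,\infty)$, $\delta$ is a subsolution of the Riccati equation $u'=-su^2-p$. Linearising via $u=y'/(sy)$ transforms this into the Sturm-Liouville equation $y''+sp\,y=0$, which by Hille's classical oscillation criterion is oscillatory at infinity since
\[
\liminf_{x\to\infty}\, sx\rho(x) \;\ge\; sB' \;>\; \frac{1}{4}.
\]
Hence every non-trivial solution $y$ has zeros accumulating at infinity, and the Riccati solution $u$ with $u(x_0)=\delta(x_0)>0$ blows up to $-\infty$ at the first zero of $y$ past $x_0$. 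The standard subsolution comparison gives $\delta\le u$ on the interval of common existence, which forces $\delta$ to take negative values before the blowup, contradicting $\delta>0$. Thus $\eta\to\infty$, finishing the argument.

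The principal technical obstacle is making this Riccati comparison and the oscillation test rigorous when $-d\psi$ has a singular part, so that $p$ is only a measure. This can be handled either within the measure-coefficient Sturm-Liouville framework (where Hille's test extends in the natural form $\liminf x\mu([x,\infty))>1/4$ with $\mu=sp$), or by approximating $\psi$ by strictly decreasing $C^1$ functions that preserve the bound $\rho\ge B'/x$ and invoking continuity of the Pr{\"u}fer angle in the coefficient function via \cite[Section 5.2]{Rembook}.
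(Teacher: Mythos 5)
Your proof is correct and rests on the same core mechanism as the paper's (very brief) argument: reduce to showing that the Pr\"ufer angle increases without bound, set up a Riccati comparison, and invoke oscillation of the linearized Schr\"odinger equation whose effective inverse-square coupling exceeds $1/4$. The main difference is where the comparison is made. The paper stays with $\theta$ itself and bounds $\sin^2(\theta-\psi)\ge(1-\epsilon)(\theta-C/x)^2$ directly, using $\psi(x)\ge C/x$ with $C<B$; this yields a Carath\'eodory comparison ODE $\theta_1'=(1-\epsilon)t(\theta_1-C/x)^2$ in which $\psi$ no longer appears, and the resulting Euler equation $u''+(1-\epsilon)Ctu/x^2=0$ is solved explicitly (its oscillation for $(1-\epsilon)Ct>1/4$ is the whole point). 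You instead pass to $\eta=\theta-\psi$, which is conceptually attractive since $\eta$ tracks progress toward each threshold $n\pi$ uniformly and both summands in $d\eta$ are nonnegative, but it imports the measure $-d\psi$ into the Riccati equation and into the linearized Sturm--Liouville equation $y''+sp\,y=0$. That is why you then need either a measure-coefficient version of Hille's test or a smoothing/approximation step, which you flag but do not carry out. Both of your proposed fixes do work (after subtracting the common measure term, the Riccati comparison is elementary, and Hille's criterion does extend to measure potentials), but the paper's route of replacing $\psi$ by its lower bound $C/x$ \emph{before} linearizing sidesteps the issue entirely and keeps everything in the classical ODE comparison setting. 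In short: same approach, with your appeal to Hille's criterion being the natural generalization of the paper's explicit Euler-equation computation, at the cost of a measure-theoretic complication that the paper's ordering of steps avoids.
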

\begin{proof}[Proof of Theorem \ref{T4.1}]
We first give an oscillation theoretic description of $T=\min\sigma_{ess}$ for $H\in\mathcal C_+$.
Clearly, $T$ is characterized by the pair of conditions $\dim E(0,t)<\infty$ for $t<T$,
$\dim E(0,t)=\infty$ for $t>T$. By Theorem \ref{T2.1}, this is equivalent to the corresponding conditions
\begin{equation}
\label{4.1}
\lim_{x\to\infty} \theta(x;t)<\infty \quad (0<t<T); \quad\quad
\lim_{x\to\infty} \theta(x;t)=\infty \quad (t>T)
\end{equation}
on the Pr{\"u}fer angle $\theta$ with $\theta(0;t)=0$, say.

We will again use the Pr{\"u}fer equation in the form \eqref{3.5}. As we observed earlier,
our only chance to come to rest is at the values $\psi(\infty)+n\pi$, so we only need to analyze what happens
when $\theta(x;t)$ comes close to one of these. Note that unlike in the previous section, the two angles
are now in contrary motion: $\psi$ decreases, while $\theta$ increases.

We start with the first inequality from Theorem \ref{T4.1}. For notational convenience, we assume that
$\psi(\infty)=0$; the general case can be reduced to this situation by applying a rotation,
as discussed in the last part of the proof of Theorem \ref{TWW2}. Actually, the agreement that $\psi(\infty)=0$
is not completely consistent with our earlier conventions on the range of $\psi$, but this discrepancy is
harmless; of course, we can always add multiples of $\pi$ to $\psi$.

We will then show that if
$0\le\psi(x)\le B/x$ ($x\ge a$) and $0<t<1/(4B)$, then the solution
$\theta(x)$ of
\begin{equation}
\label{4.2}
\theta' = t\sin^2(\theta-\psi(x)) ,
\end{equation}
with suitable initial value $\theta(a)=\theta_0<0$, will satisfy $\theta(x)<0$ for all $x>a$.
(This equation \eqref{4.2} is of course the same as \eqref{3.5}, but for positive spectral parameter $t$ now.)
This will establish that we are in the first case of \eqref{4.1}, and the desired inequality will follow
since $B>A$ can be taken arbitrarily close to $A$ if we make $a$ large enough.

Note also that it indeed suffices to discuss one specific initial value $\theta_0=\theta(a)$, and
it doesn't really matter what value $\theta_0$ we choose here: which alternative
of \eqref{4.1} holds will not depend on this value. Or in more concrete style, we can observe that if
a different initial value is chosen, then perhaps $\theta(x)$ will cross the value zero one more time,
but then on the next lap we will see the value $\theta_0$ again and the argument applies now.

We use the comparison principle, and we are interested in the range $\theta_0\le\theta\le 0$, so we estimate
the right-hand side of \eqref{4.2} from above by $t(\theta-B/x)^2$ and then consider the comparison
equation $\theta'_1 = t(\theta_1-B/x)^2$, $\theta_1(a)=\theta_0$. We have $\theta(x)\le\theta_1(x)$, so
it is now enough to show that $\theta_1(x)<0$ for all $x\ge a$.
In fact, by rescaling the $x$ variable, it suffices to consider the case $t=1$, so we will analyze the initial
value problem
\begin{equation}
\label{4.7}
\theta'_1 = \left( \theta_1 - \frac{C}{x} \right)^2, \quad \theta_1(a)=\theta_0 ,
\end{equation}
with $C=Bt<1/4$.

Introduce $\alpha = \theta_1-C/x$. Then the equation becomes
\begin{equation}
\label{4.3}
\alpha' = \alpha^2 + \frac{C}{x^2} .
\end{equation}
This is a Riccati equation, and the well known substitution $\alpha=-u'/u$ transforms
it into a Schr{\"o}dinger equation
\begin{equation}
\label{4.4}
-u''-\frac{C}{x^2}u = 0 ;
\end{equation}
more precisely, if we have a zero free solution $u$ of \eqref{4.4}, then $\alpha=-u'/u$ will solve \eqref{4.3}.
Now \eqref{4.4} is an Euler equation that can be solved explicitly by powers $x^p$, and a quick calculation
shows that here the admissible exponents are
\[
p_{\pm} = \frac{1}{2} \left( 1 \pm \sqrt{1-4C}\right) .
\]
We now take specifically the solution with the initial values $u(a)=0$, $u'(a)=1$. This will be computationally
convenient, but note that this actually corresponds formally to the initial value $\alpha(a)=-\infty$.
This will not be a problem because $\alpha(x)$ reaches finite values instantaneously for $x>a$, and, as we discussed,
$\theta$ can be assigned any negative initial value.

A straightforward calculation now shows that
\[
\alpha(x) = - \frac{p_+ a^{p_-}x^{p_+-1}-p_-a^{p_ +}x^{p_--1}}{a^{p_-}x^{p_+}-a^{p_+}x^{p_-}} .
\]
Since $p_++p_-=1$, we can rewrite this as
\[
\alpha(x) = - \frac{1}{a\xi} \frac{p_+\xi^d-p_-}{\xi^d-1}, \quad \xi = \frac{x}{a} \ge 1, \quad
d=p_+-p_-=\sqrt{1-4C} .
\]
We want to show that $\alpha(x)<-C/x$ for all $x\ge a$. This is certainly true initially, so we only
need to make sure that $\alpha(x)=-C/x$ can never happen. To confirm this, it suffices to set $y=\xi^d$
and then observe that the equation
\[
\frac{p_+y-p_-}{y-1} = C , \quad 0<C<1/4,
\]
has no solutions $y>1$.

The reader familiar with the spectral theory of Schr{\"o}dinger operators will undoubtedly have observed
that this part of the argument is powered by the well known fact
that the operator $-d^2/dx^2-C/x^2$ has no negative spectrum if $C<1/4$ (and this bound is sharp, there
will be infinite negative spectrum for $C>1/4$).

We now prove the upper bound $\min\sigma_{ess}\le 1/A$. Let $t>1/A$. In fact,
by rescaling, it will again suffice to treat the case $t=1$, $A>1$.
We must then show that if $\theta(a;t=1)=\theta_0<0$ at some $a>0$,
then $\theta(x)=0$ for some $x>a$ (which will then imply that $\theta(y)>0$
for $y>x$, and this is what we really need). The key point is that this must hold for any $a$, no
matter how large, for a given $\theta_0$.
The precise value of $\theta_0<0$, on the other hand, is again irrelevant.
Indeed, $\theta(x)$ will always approach zero;
the only question is if this value is reached in finite time.

So let $a>0$ be given, and let's also assume that $a$ is so large that $\psi(x)\le\pi/2$, say,
for $x\ge a$. There are arbitrarily large $b>a$ such that $\psi(b)\ge B/b$, and this
works for any $B<A$. Since $\psi$ is decreasing, we will then have $\psi(x)\ge B/b$ for
all $x\le b$. Thus
\[
\sin^2(\theta-\psi(x))\ge (1-\epsilon) \left( \theta-\frac{B}{b}\right)^2 ,
\]
and this will be valid on $x\in [a,b]$, as long as $\theta_0\le \theta\le 0$. Moreover,
we can achieve any $\epsilon>0$ here
if we take $\theta_0$ close enough to zero and $b$ large enough. In a moment, it will
turn out that we want $\epsilon < 1-1/B$; here, we of course assume that we took $B$ sufficiently
close to $A$, so that $B>1$ also.

We will then work with
\[
\theta'_1= (1-\epsilon) \left( \theta_1 - \frac{B}{b}\right)^2 , \quad \theta_1(a)=\theta_0
\]
as our comparison equation. This can be solved explicitly, and the perhaps most convenient way to do this is to
again introduce $\alpha=\theta_1-B/b$. Then $\alpha'=(1-\epsilon)\alpha^2$ and thus
\[
\alpha(x) = \frac{\theta_0-B/b}{1-(\theta_0-B/b)(1-\epsilon)(x-a)} \ge \frac{-1}{(1-\epsilon)(x-a)} .
\]
We have shown that
\[
\theta(x) \ge \frac{B}{b} - \frac{1}{(1-\epsilon)(x-a)}
\]
(at least as long as $\theta(x)\le 0$), and this lower bound can be made positive at $x=b$ since
$1/(1-\epsilon)<B$ and we can still take $b$ arbitrarily large.
\end{proof}
\begin{proof}[Proof of Theorem \ref{T4.2}]
This is very similar to what we just did, so we'll just give a brief sketch.
The comparison equation \eqref{4.7} also works as a lower bound if now $\psi(x)\ge C/x$,
$C<B$, and we introduce an additional factor $1-\epsilon$ on the right-hand side.
The analysis of this equation then proceeds exactly as in the first part of the previous proof.
The fact that the Schr{\"o}dinger operator $-d^2/dx^2-D/x^2$ has infinite negative spectrum
when $D>1/4$ will make the argument work.
\end{proof}

A classical, well known criterion for the absence of essential spectrum of a Schr{\"o}dinger operator
$\mathcal L=-d^2/dx^2+V(x)$ on $L^2(0,\infty)$ with $V\ge 0$, say, is \textit{Molchanov's criterion }\cite{Mol},
which says that $\sigma_{ess}(\mathcal L)=\emptyset$ if and only if
\begin{equation}
\label{mol}
\lim_{x\to\infty} \int_x^{x+d} V(t)\, dt =\infty \quad \textrm{\rm for all }d>0 .
\end{equation}
Schr{\"o}dinger equations $-y''+V(x)y=zy$ can be written as canonical systems, basically by running the
variation-of-constants method with the equation for $z=0$ taking the role of the unperturbed system;
see \cite[Section 1.3]{Rembook} for further details. To end up with a canonical
system $H\in\mathcal C_+$, we assume that $\mathcal L\ge 0$ also. The canonical system will be
of the form
\[
H_0(x) = \begin{pmatrix} p^2 & pq \\ pq & q^2 \end{pmatrix} ,
\]
and here $p,q$ solve $-y''+Vy=0$ and satisfy certain initial conditions (which are irrelevant for us and also depend
on the boundary condition at $x=0$ of $\mathcal L$). This coefficient function $H_0$ is not yet trace normed; to do this,
we need to pass to the new variable
\begin{equation}
\label{4.8}
X = \int_0^x \left( p^2(t) + q^2(t)\right) \, dt .
\end{equation}
We see that indeed $\det H_0(x)=0$, as guaranteed by Theorem \ref{TWW1}, so $H=P_{\varphi}$ with $\cot\varphi = p/q$.
Theorem \ref{TWW1} also shows that $M=\lim_{x\to\infty} p/q$ exists,
possibly after switching $p,q$, to avoid the scenario
where $M=\infty$. So if we introduce the new solution $f=p-Mq$, then $f/q\to 0$.

It will now be convenient to assume that $\min\sigma(\mathcal L)>0$. This is not really an extra assumption because
any energy can take over the role of $z=0$ in the transformation; what we do is deliberately choose an energy below
the spectrum. This has the technical advantage that we will then have an $L^2$ solution at this energy, and obviously,
in our situation, this must be the solution $f$ just constructed.

Theorem \ref{Tess} now says that $\sigma_{ess}=\emptyset$ if and only if $Xf/q\to 0$. There won't be any problems
with the zeros of $q$ here because $q$ has at most one, by (classical) oscillation
theory for Schr{\"o}dinger equations.
This condition can be given a more intuitive form. Constancy of the Wronskian $W=f'q-fq'=q^2(f/q)'$ implies that
\begin{equation}
\label{4.9}
\frac{f(x)}{q(x)} = -W\int_x^{\infty} \frac{dt}{q^2(t)} .
\end{equation}
By letting $f,q$ span the solution space, we see that every solution is either a multiple of $f$
or behaves asymptotically like a multiple of $q$. We then use \eqref{4.8}, \eqref{4.9} in the expression
$Xf/q$ and finally arrive at the following criterion.
\begin{Theorem}
\label{T4.3}
Let $\mathcal L=-d^2/dx^2+V(x)$ be a half line Schr{\"o}dinger operator that is bounded below, and fix an
$E_0<\min\sigma(\mathcal L)$. Let $q(x)$ be any solution of $-y''+Vy=E_0y$ with $q\notin L^2(0,\infty)$.
Then $\sigma_{ess}(\mathcal L)=\emptyset$ if and only if
\[
\lim_{x\to\infty} \int_0^x q^2(t)\, dt \int_x^{\infty} \frac{dt}{q^2(t)} = 0 .
\]
\end{Theorem}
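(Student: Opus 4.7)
The plan is to combine the canonical-system translation set up just before the statement, the Wronskian identity \eqref{4.9}, and the criterion from Theorem \ref{Tess}. Since adding a constant to $V$ preserves both the emptiness of $\sigma_{ess}$ and the stated integral condition on $q$, I would first normalize so that $E_0 = 0$ and $\min \sigma(\mathcal L) > 0$. Choosing linearly independent solutions $p, q$ of $-y'' + Vy = 0$ and trace-normalizing via \eqref{4.8}, one obtains $H = P_\varphi$ with $\cot\varphi = p/q$. Theorem \ref{TWW1} gives the existence of $\varphi(\infty)$; after swapping $p$ and $q$ if needed, I may take $M := \cot\varphi(\infty) \in \R$ and set $f := p - Mq$, so that $f \in L^2(0,\infty)$ (this is the subordinate solution, which is square integrable because $0$ lies below the spectrum) and $f/q \to 0$.

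Applying Theorem \ref{Tess} in the $X$-variable, $\sigma_{ess}(\mathcal L) = \emptyset$ iff $X(\varphi(X) - \varphi(\infty)) \to 0$. Since $\cot$ is a local diffeomorphism near $\varphi(\infty)$ (which is the reason for insisting on $M$ finite), this is equivalent to
\[
X(x) \cdot \frac{f(x)}{q(x)} \longrightarrow 0 \quad (x \to \infty),
\]
where $X(x)$ is given by \eqref{4.8}.

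It remains to rewrite $Xf/q$ purely in terms of $q$. The Wronskian identity \eqref{4.9} gives $f(x)/q(x) = -W \int_x^\infty dt/q^2(t)$ with $W = f'q - fq'$ constant. For the other factor, $p^2 + q^2 = (1 + (p/q)^2) q^2$ together with $p/q \to M$ and the divergence of $\int_0^x q^2$ (since $q \notin L^2$) yields $X(x) \sim (1+M^2) \int_0^x q^2(t)\,dt$. Multiplying, $Xf/q \to 0$ becomes precisely the stated criterion, up to the nonzero constant $-(1+M^2)W$. To cover the arbitrary non-$L^2$ solution $q$ allowed in the statement, note that any such $q$ has the form $\tilde q = a q_0 + b f$ with $a \ne 0$, and $f/q_0 \to 0$ forces $\tilde q \sim a q_0$, so the two integrals transform by compensating factors $a^{\pm 2}$ and the limit condition is unaffected.

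The main obstacle is bookkeeping rather than conceptual: the two equivalences above must be verified in a truly two-sided multiplicative sense so that "the limit is zero" transfers correctly. This amounts to checking $X \sim (1+M^2)\int_0^x q^2$ as a genuine asymptotic (not merely subsequentially) via $p/q \to M$ and the divergence of $\int_0^x q^2$, and then ensuring the $\cot$-linearization at $\varphi(\infty)$ survives multiplication by the unbounded factor $X$. Both steps are routine provided one keeps track of the fact that $\varphi(X) - \varphi(\infty)$ and $\cot\varphi(X) - M$ are asymptotically proportional with a nonzero finite constant determined by $\sin\varphi(\infty)$.
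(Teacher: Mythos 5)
Your proposal follows the paper's own route step for step: normalize so that $E_0=0$ lies strictly below the spectrum, pass to the canonical system $H=P_\varphi$ with $\cot\varphi=p/q$ via \eqref{4.8}, invoke Theorem \ref{Tess}, linearize $\cot$ at the finite limit $M=\cot\varphi(\infty)$, and use the Wronskian identity \eqref{4.9} together with $X\sim(1+M^2)\int_0^x q^2$ to reach the stated integral criterion. The extra bookkeeping you flag (the Ces\`aro-type asymptotic for $X$, the nonvanishing of the derivative of $\cot$ at $\varphi(\infty)$, and the invariance of the criterion under changing $q$ to any other non-$L^2$ solution) is all correct and only makes explicit what the paper's sketch leaves implicit.
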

It is not hard to show directly that this condition is equivalent to \eqref{mol}, but
we knew that already. So Theorem \ref{Tess} can be said to contain Molchanov's criterion as a special case,
but of course it is much more general because it applies to arbitrary canonical systems, not just the
ones that are Schr{\"o}dinger equations rewritten.
\section{Diagonal canonical systems and exponential orders}
Let $H\in\mathcal C_+$, and write $H(x)=P_{\varphi(x)}$, with $\varphi(x)$ chosen as in Theorem \ref{TWW1}.
In fact, it will be convenient now to also demand that $\varphi(x)$ is right-continuous.
Furthermore, we make the additional assumption that $-\pi/2<\varphi(x)\le \pi/2-\delta$ for some $\delta>0$.

Let $u$ be a solution of \eqref{can}, and introduce the new variable
\begin{equation}
\label{5.1}
t = -\tan\varphi(x) ;
\end{equation}
this is an increasing function of $x>0$, with range contained in $[-t_0,\infty)$, $t_0=-\tan\varphi(0+)$.
We want to rewrite \eqref{can} by using $t$ instead of $x$, so we would like to define $v(t)=u(x)$,
with $t$ and $x$ related by \eqref{5.1}, but here we must be careful since $t(x)$ can fail to be injective
and its range is not guaranteed to be an interval.

We address these technical issues as follows: gaps in the range of $t(x)$ result from jumps of $\varphi(x)$,
and if $\varphi(a)<\varphi(a-)$, then we simply set $v(t)=u(a)$ for $-\tan\varphi(a-)\le t< -\tan\varphi(a)$.
If, on the other hand, $\varphi(x)$ is constant on an interval $(a,b)$ (and this interval is maximal with this
property), then we set $v(t)=u(b)$ for $t=-\tan\varphi(a)$. There is no conflict between these definitions at
points at which both apply, and in all other cases, there is a unique $x$ with
$t=t(x)$, and the originally intended definition $v(t)=u(x)$ works. The function $v(t)$ thus defined is right-continuous
and of bounded variation, with jumps
precisely at the values $t$ that correspond to intervals of constancy of $\varphi$, and of course these
are exactly the singular intervals of the original system.

We now claim that we can rewrite the integrated form of \eqref{can},
\[
u(x)-u(0) = zJ\int_0^x P_{\varphi(y)}u(y)\, dy ,
\]
at a regular point $x$, as follows:
\begin{equation}
\label{5.2}
v(t) - v(t_0) = zJ \int_{(t_0,t]} \begin{pmatrix} 1 & -s \\ -s & s^2 \end{pmatrix} v(s)\, dw(s) ,
\end{equation}
and here $dw$ is a Borel measure on $(t_0,\infty)$ that is defined by the condition that
$(1+t^2)\,dw(t)$ is the image measure of $dx$ under the correspondence $x\mapsto t$.
Observe now that
\[
P_{\varphi(x)} = \frac{1}{1+t^2} \begin{pmatrix} 1 & -t \\ -t & t^2 \end{pmatrix} ,
\]
and this shows that we indeed obtain \eqref{5.2}, from the substitution rule.

It is perhaps also helpful to comment more explicitly on what happens here when $\varphi$ is either constant
on an interval or has a jump. In the first case, if $(a,b)$ is a singular interval, so
$\varphi(x)$ is constant on $[a,b)$, then $w$ will have the point mass $(1+t^2)w(\{t\})=b-a$ at $t=-\tan\varphi(a)$.
Therefore \eqref{5.2} will give $v$ a jump
\[
v(t) = \left( 1+z(b-a)JP_{\varphi(a)}\right) v(t-)
\]
at this point, which is exactly what the singular interval did to $u(x)$ across $(a,b)$.
If, on the other hand, $t\in (c,d)$ is an interval corresponding to a jump of $\varphi(x)$, then
$w((c,d))=0$, and this is consistent with the fact that $v$ is constant on this interval.

Next, we introduce
\[
y(t) = \begin{pmatrix} 1 & -t \\ 0 & 1 \end{pmatrix} v(t) .
\]
Then \eqref{5.2} is equivalent to
\begin{equation}
\label{5.3}
y(t)-y(t_0) = \int_{(t_0,t]} \begin{pmatrix} 0 & -ds \\ z\, dw(s) & 0 \end{pmatrix} y(s) .
\end{equation}
This we can confirm by a brute force calculation: by expressing everything in \eqref{5.3} in terms of $v$,
we see that we will obtain this equation if we can show that the matrix
$\bigl( \begin{smallmatrix} 0 & 1 \\ 0&0 \end{smallmatrix} \bigr)$ annihilates the vector
\[
(t-t_0)v(t_0)-\int_{t_0}^t v(s)\, ds +
z \int_{(t_0,t]} (t-s)J \begin{pmatrix} 1 & -s \\ -s & s^2 \end{pmatrix}v(s) \, dw(s) .
\]
To do this, write $t-s = \int_{[s,t]}du$ in the last term, change the order of integration in the resulting
double integral, and use \eqref{5.2}.

As our final transformation, we write $z=\zeta^2$ and introduce
\[
p(t) = \begin{pmatrix} \zeta & 0 \\ 0 & 1 \end{pmatrix} y(t) .
\]
Then \eqref{5.3} becomes
\begin{equation}
\label{5.5}
J(p(t)-p(t_0)) = -\zeta\int_{(t_0,t]} \begin{pmatrix} dw(s) & 0 \\ 0 & ds \end{pmatrix} p(s) ,
\end{equation}
and this is (almost) the promised diagonal canonical system that is associated with $H=P_{\varphi}$.
We can write this system in differential form if we pass to a new variable one more time.
More specifically, we let $T=w((t_0,t])+t-t_0$, so $dT$ is the trace of the coefficient matrix
from \eqref{5.5}, and then make $p$ a function of $T$. The transformation from $t$ to $T$ will correspond
exactly to the initial transformation of going from $x$ to $t$, except that we are now doing it in the
opposite direction. We will obtain the new coefficient function
\[
H_1(T) = \begin{pmatrix} h(T) & 0 \\ 0 & 1-h(T) \end{pmatrix} ,
\]
with $0\le h\le 1$. We give $H_1$ the required singular interval of type $0$ on the $T$ intervals corresponding
to the point masses of $dw$, and on the remaining set, we define $h$ by the condition that
$h\, dT$ is the image measure of $dw$ (and then $(1-h)\, dT$ will be related to $dt$ in the same way).

If $p(t_0,\zeta)$ is constant or has polynomial dependence on $\zeta$, then $p(T,\zeta)$ will be of order
at most $1$, and since $z=\zeta^2$, we now obtain Theorem \ref{Torder}(a) as an immediate consequence,
except that we made an additional assumption on the range of $\varphi(x)$ at the beginning of this section.
This, however, is easy to remove. Since we can compute the transfer matrix across an interval as a product
of transfer matrices across smaller subintervals, it will be enough to discuss the case when
$\varphi(0+)-\varphi(\infty)<\pi$. But then we can apply a rotation matrix, as discussed in the last part of the proof of Theorem \ref{TWW2},
to return to the situation already dealt with.

Part (c) then follows from
de~Branges's \cite{dB} well known formula for the exponential \textit{type }(not order) $\tau$
of a transfer matrix \cite[Theorem 4.26]{Rembook}, which can be computed as
\[
\tau = \int_0^T \sqrt{\det H_1(S)}\, dS = \int_0^T \sqrt{h(S)(1-h(S))}\, dS .
\]
Of course, if this is positive, then the order of $p$ will equal $1$ and thus $\textrm{ord}\: T(x;z)=1/2$.
So if this order is less than $1/2$, then $h=0$ or $1$ almost everywhere. This happens if and only if
$dw(t)$ is a purely singular measure and this is equivalent to $d\varphi$ being purely singular.

To prove part (b) of Theorem \ref{Torder}, we design suitable functions $A(z)=u_1(L;z)$, $C(z)=u_2(L;z)$,
with $u$ denoting the solution of \eqref{can} with $u(0;z)=e_1$. We will then obtain the coefficient function $H(x)$
from an inverse spectral theory result.

It is very easy to produce a canonical system with order $\nu=0$: a succession of finitely many singular intervals
will give us a polynomial transfer matrix. So we can focus on desired orders in the range $0<\nu<1/2$.
Let $\alpha=1/\nu > 2$, and define
\begin{equation}
\label{5.6}
A(z) = \prod_{n\ge 1} \left( 1 - \frac{z}{n^{\alpha}} \right) .
\end{equation}
This is the Hadamard product representation of an entire function with zeros $z_n=n^{\alpha}$,
and from the asymptotics of these it follows that $\textrm{ord}\: A = \nu$ \cite[Theorem 2.6.5]{Boas}.
We will then define a second function $C(z)$ in the same way, by giving it the zeros
$z_0=0$, $z_n = (n^{\alpha}+(n+1)^{\alpha})/2$, and $C'(0)>0$. Since these alternate with those of $A$,
it will then follow that $A-iC$ is a de~Branges function; see the discussion of \cite[Chapter VII]{Levin}.
By fundamental inverse spectral theory results \cite[Theorems 4.20, 5.2]{Rembook}, there will be a canonical
system on some interval $[0,L]$ whose transfer matrix $T(L;z)$ has $(A,C)^t$ as its first column if
$1/[(z+i)E(z)]\in H^2$, with $E=A-iC$. In our situation, it will be enough to verify that
\begin{equation}
\label{5.7}
\int_{-\infty}^{\infty} \frac{dt}{(1+t^2)|E(t)|^2} =
\int_{-\infty}^{\infty} \frac{dt}{(1+t^2)(A^2(t)+C^2(t))} < \infty .
\end{equation}
Both $|A(t)|$ and $|C(t)|$ are decreasing on $t<0$, so we can focus on $t>0$ here.
We will then estimate \eqref{5.6} to show that $|A(t)|$ can not get small
as long as we don't get close to its zeros, and of course $C$ will have the same property.
This will prove \eqref{5.7}.

The spectrum of the corresponding canonical system on $[0,L]$
with boundary condition $u_1(L)=0$ at $x=L$ is given by the zeros of $A$. As usual, we can then
view this as a half line problem, by setting $H(x)=P_{e_1}$ on $x>L$, and then $H\in\mathcal C_+$.
Thus \eqref{5.7} will also establish Theorem \ref{Torder}(b).

The argument to prove \eqref{5.7}
is quite routine, plus there are similar estimates available in the literature \cite[Chapter 4]{Boas},
so we will just give a sketch. Since $(n+1)^{\alpha}-n^{\alpha}\simeq \alpha n^{\alpha-1}$, it will be enough
to consider $t\ge 0$ with $|t-n^{\alpha}|\gtrsim n^{\alpha-1}$ for all $n\ge 1$. By replacing the sum by an integral,
it's then easy to see that
\[
\log |A(t)| = \sum_{n\ge 1} \log \left| 1- \frac{t}{n^{\alpha}} \right|
\]
will satisfy $\log |A(t)|\gtrsim I(\alpha)t^{1/\alpha}-O(\log t)$ for these $t$, with
\[
I(\alpha) = \int_0^{\infty} \log |1-s^{-\alpha}|\, ds .
\]
The monotonicity properties of $\log x$ imply that $I(\alpha)$ is strictly increasing, and
\begin{align*}
I(2) & = \lim_{L\to\infty} \int_0^L \log \frac{(s+1)|s-1|}{s^2}\, ds \\
& = \lim_{L\to\infty} \left( \int_1^{L+1} \log s\, ds + \int_{-1}^{L-1}\log |s| \, ds -2\int_0^L \log s\, ds \right) = 0 .
\end{align*}
So $I(\alpha)>0$ for $\alpha>2$, and \eqref{5.7} follows.

\end{document}